\definecolor{webgreen}{rgb}{0,.5,0}
\definecolor{webbrown}{rgb}{.6,0,0}
\def\lcm{\operatorname{lcm}}
\def\cdeg{\operatorname{cdeg}}
\def\ndeg{\operatorname{ndeg}}
\def\sd{\operatorname{sd}}
\def\Z{{\mathbb{Z}}}
\def\N{{\mathbb{N}}}
\newtheorem{theorem}{Theorem}[section]
\newtheorem{lemma}[theorem]{Lemma}
\newtheorem{cor}[theorem]{Corollary}
\newtheorem{prop}[theorem]{Proposition}
\def\3{\subset }
\def\4{\subseteq }
\def\alege#1#2{{\displaystyle{{#1}\choose{#2}}}}
\def\<{\left<}
\def\>{\right>}
\def\vsp{\vspace*{1,5mm}\\ }
\def\n{\noindent }
\def\bk{\bigskip }
\def\bit{\begin{itemize}}
\def\eit{\end{itemize}}
\def\3{\subset }
\def\4{\subseteq }
\def\alege#1#2{\Bigl(\begin{array}{c}{#1}\\ {#2}\end{array}\Bigr)}
\def\0{\leqno}
\def\a{{\alpha}}
\def\barr{\begin{array}}
\def\earr{\end{array}}
\def\dd{\displaystyle}
\def\calg{{\cal G}}
\def\frax{\dd\frac}
\def\n{\noindent }
\def\acom{\vrule height 1.4ex width .1ex depth -.1ex}
\def\8{\hbox{$\acom\!\raise1pt\hbox{$\times$}$}}
\title{\bf Cyclicity degrees of finite groups}
\author{Marius T\u arn\u auceanu, L\'{a}szl\'{o} T\'{o}th}
\date{}
\begin{document}

\maketitle

\centerline{Acta Math. Hungar. {\bf 145} (2015), 489--504}

\begin{abstract}
We introduce and study the concept of cyclicity
degree of a finite group $G$. This quantity measures the
probability of a random subgroup of $G$ to be cyclic. Explicit
formulas are obtained for some particular classes of finite
groups. An asymptotic formula and minimali\-ty/maximality results on cyclicity
degrees are also inferred.
\end{abstract}

\noindent{\bf MSC (2010):} Primary 20D60, 20P05;  Secondary 20D30,
20F16, 20F18.

\noindent{\bf Key words:} cyclicity degree, subgroup lattice,
poset of cyclic subgroups, number of subgroups.


\section{Introduction}

In the last years there has been a growing interest in the use of
probability in finite group theory. One of the most important
aspects which have been studied is the probability that two
elements of a finite group $G$ commute. It is called the {\it
commutativity degree} of $G$, and has been investigated in many
papers, as \cite{Gus1973,Les1987,Les1988,Les1989,Les1995,Les2001,Rus1979,She1975}. Inspired by
this concept, in \cite{Tar2009} the first author introduced a similar notion for the
subgroups of $G$, called the {\it subgroup commutativity degree}
of $G$. This quantity is defined by
$$\barr{lcl} \sd(G)&=&\frax1{|L(G)|^2}\,\left|\{(H,K)\in L(G)^2:
HK=KH\}\right|=\vsp &=&\frax1{|L(G)|^2}\,\left|\{(H,K)\in L(G)^2:
HK\in L(G)\}\right|\earr$$ (where $L(G)$ denotes the subgroup
lattice of $G$) and it measures the probability that two subgroups
of $G$ commute, or equivalently the probability that the product of
two subgroups of $G$ be a subgroup of $G$ (recall also the natural
generalization of $\sd(G)$, namely the {\it relative subgroup
commutativity degree} of a subgroup of $G$, introduced and studied
in \cite{Tar2011}).
\bigskip

Another probabilistic notion on $L(G)$ has been investigated in
\cite{Tar}: the {\it normality degree} of $G$. It is defined by
$$\ndeg(G)=\dd\frac{|N(G)|}{|L(G)|}$$ (where $N(G)$
denotes the normal subgroup lattice of $G$) and measures the
probability of a random subgroup of $G$ to be normal.
\bigskip

Clearly, analogous constructions can be made by replacing $N(G)$
with other remarkable posets of subgroups of $G$. One of them is the
poset of cyclic subgroups of $G$, usually denoted by $C(G)$. In this
way, one obtains the following quantity
$$\cdeg(G)=\dd\frac{|C(G)|}{|L(G)|}\hspace{1mm},$$that measures the probability of a random
subgroup of $G$ to be cyclic and will be called the {\it cyclicity
degree} of $G$. Its study is the main purpose of our paper.
\bigskip

Note that for an arbitrary finite group $G$ computing the number of
subgroups, as well as the number of cyclic subgroups, is a difficult
work. These numbers are in general unknown, excepting for few
particular classes of finite groups. We also recall the
powerful connection between the cyclic subgroup structure of a
finite group and the set of its element orders.
\bigskip

The paper is organized as follows. Some basic properties and results
on cyclicity degree are presented in Section \ref{sect_basic}. Section \ref{sect_classes} deals with
cyclicity degrees for some special classes of finite groups: abelian
groups, hamiltonian groups, $p$-groups possessing a cyclic maximal
subgroup and ZM-groups. An asymptotic formula for $\sum_{n\le x} \cdeg(\Z_n\times \Z_n)$ is also deduced.
In Section \ref{sect_min_max} we give several minimality/maximality results and conjectures on
cyclicity degrees of finite abelian $p$-groups. In the final section an problem is formulated.
\bigskip

Most of our notation is standard and will usually not be repeated
here. Elementary notions and results on lattices (respectively on
groups) can be found in \cite{Gra1978} (respectively in \cite{Hup1967,Isa2008,Suz1982}).
For subgroup lattice concepts we refer the reader to
\cite{Sch1994,Tar2003,Tar2006}.


\section{Basic properties of cyclicity degree} \label{sect_basic}

Let $G$ be a finite group. First of all, remark that the cyclicity
degree $\cdeg(G)$ satisfies the following relation
$$0<\cdeg(G)\le 1.$$

Moreover, we have $\cdeg(G)=1$ if and only if $G$
is cyclic.
\bigskip

Next we observe that several finite non-cyclic groups, as $S_3$,
satisfy the property that all their proper subgroups are cyclic.
We are able to determine all finite groups satisfying this
property.

\begin{theorem} \label{theor_char}
Let $G$ be a finite group. Then
$$\cdeg(G)=\frax{|L(G)|-1}{|L(G)|}$$
if and only if $G$ is either a certain semidirect product of a normal subgroup of order $p$ by a
cyclic subgroup of order $q^n$ {\rm(}$p$, $q$ primes, $n\in
\mathbb{N}^*:=\{1,2,\ldots\} ${\rm)}, an elementary abelian $p$-group of rank two or
the quaternion group $Q_8$.
\end{theorem}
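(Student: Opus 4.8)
The plan is to recognize that the condition $\cdeg(G)=\frac{|L(G)|-1}{|L(G)|}$ says exactly that $|L(G)|-|C(G)|=1$, i.e. $G$ has precisely one non-cyclic subgroup. Since $G$ itself is non-cyclic (otherwise $\cdeg(G)=1$), that unique non-cyclic subgroup must be $G$. So the problem reduces to classifying all finite non-cyclic groups \emph{all of whose proper subgroups are cyclic}. This is the key reformulation, and after it the work becomes structural group theory rather than anything about the degree itself.

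First I would show such a group $G$ must be a $p$-group or a group of order divisible by exactly two primes. The idea is that every proper subgroup is cyclic, hence abelian, so $G$ is \emph{minimal non-abelian} unless $G$ is itself abelian; either way every maximal subgroup is cyclic. I would invoke the classical structure theory (essentially the Miller--Moreno classification of minimal non-abelian groups, or a direct argument): if $G$ is non-cyclic but all proper subgroups are cyclic, then $|G|$ involves at most two primes, and I would split into the abelian and non-abelian cases. In the abelian case, a non-cyclic abelian group with all proper subgroups cyclic must be elementary abelian of rank two, namely $\mathbb{Z}_p\times\mathbb{Z}_p$; any other abelian group (rank $\ge 3$, or rank $2$ with a higher exponent, or involving two primes) contains a proper non-cyclic subgroup, which I would exhibit explicitly.

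For the non-abelian case I would use that $G$ is then minimal non-abelian, so its order is $p^aq^b$ with one Sylow subgroup normal and cyclic. Splitting further: if $|G|=p^n$ is a prime power, the minimal non-abelian $p$-groups with all proper subgroups cyclic are exactly $Q_8$ together with the metacyclic groups of the stated semidirect-product shape (here $p=q$); I would pin these down by analyzing the cyclic maximal subgroup and the center. If two primes $p\ne q$ divide $|G|$, then the normal Sylow $p$-subgroup is forced to have order $p$ (a larger normal cyclic $p$-subgroup together with an element of order $q$ would generate a non-cyclic proper subgroup), and the quotient acts on it through a cyclic group of order $q^n$, giving precisely the semidirect product $\mathbb{Z}_p\rtimes\mathbb{Z}_{q^n}$ described in the statement. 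I would also check the converse, that each listed group genuinely has all proper subgroups cyclic, which is a routine verification of their subgroup lattices.

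The main obstacle I expect is the case analysis in the $p$-group case: ruling out spurious families and correctly identifying which metacyclic $p$-groups have \emph{every} proper subgroup cyclic (as opposed to merely having a cyclic maximal subgroup). In particular, separating $Q_8$ from the modular/split families and confirming that groups like $Q_{16}$ or $D_8$ fail (they contain a non-cyclic proper subgroup such as a Klein four-group) is the delicate point. I would handle this by locating the subgroups of order $p^2$ and checking each is cyclic, using the fact that a $p$-group in which every subgroup of order $p^2$ is cyclic has a very restricted structure.
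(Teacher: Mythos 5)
Your reformulation---the hypothesis says $|L(G)|-|C(G)|=1$, the unique non-cyclic subgroup must be $G$ itself, so one must classify the finite non-cyclic groups all of whose proper subgroups are cyclic---is exactly the paper's starting point. From there, though, your machinery differs. The paper, in the non-$p$-group case, picks a non-normal Sylow $q$-subgroup $S$, notes $N_G(S)$ is proper hence cyclic so $S\le Z(N_G(S))=N_G(S)$, and applies Burnside's normal $p$-complement theorem to write $G=TS$ with $T$ a cyclic normal $q$-complement, then forces $|T|=p$; in the $p$-group case it splits on whether $G$ has one or several subgroups of order $p$, invoking the theorem that a $p$-group with a unique subgroup of order $p$ is cyclic or generalized quaternion, and eliminates $Q_{2^n}$ for $n\ge 4$ via the proper non-cyclic subgroup $\langle a^2,b\rangle\cong Q_{2^{n-1}}$. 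Your route through the Miller--Moreno classification of minimal non-abelian groups is a legitimate alternative (every proper subgroup cyclic implies every proper subgroup abelian) and gives you the two-prime bound on $|G|$ for free, at the cost of quoting a heavier classification where the paper needs only Burnside's transfer argument; your two-prime analysis forcing the normal Sylow subgroup to have order $p$ matches the paper's conclusion, and your explicit verification of the converse is a point the paper leaves implicit.

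However, your prime-power branch contains a genuine error: you assert that the minimal non-abelian $p$-groups with all proper subgroups cyclic are ``exactly $Q_8$ together with the metacyclic groups of the stated semidirect-product shape (here $p=q$).'' No such metacyclic family exists; the semidirect products in the theorem are necessarily non-nilpotent, with $p\ne q$. Indeed, since $|\mathrm{Aut}(\Z_p)|=p-1$ is prime to $p$, any extension of a normal subgroup of order $p$ by a $p$-group acting on it is central, so the ``semidirect product'' with $p=q$ degenerates to the abelian group $\Z_p\times\Z_{p^n}$, which for $n\ge 2$ contains the proper non-cyclic subgroup $\Z_p\times\Z_p$ and for $n=1$ is the rank-two case already listed separately. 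The correct dichotomy in the $p$-group case is the paper's: if $G$ has two distinct subgroups $M_1\neq M_2$ of order $p$, then $M_1M_2\cong\Z_p\times\Z_p$ is non-cyclic, hence equals $G$, so $G\cong\Z_p\times\Z_p$ is abelian and no non-abelian example arises this way; if the subgroup of order $p$ is unique, $G$ is generalized quaternion and only $Q_8$ survives. So the non-abelian $p$-group branch yields $Q_8$ alone, and the analysis you defer (``pin these down by analyzing the cyclic maximal subgroup and the center'') would have to end by showing your proposed extra family is empty---a conclusion your plan as written does not anticipate.
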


\begin{proof} Let $G$ be a finite non-cyclic group all of whose
proper subgroups are cyclic.

Case I. Assume that $G$ is not a $p$-group. Since $G$ is not cyclic and all its Sylow subgroups
are cyclic, we infer that there is a Sylow $q$-subgroup $S\cong {\mathbb{Z}}_{q^n}$ which is not normal.
Then $N_G(S)$ is different from $G$ and consequently it is cyclic by our
hypothesis. It follows that $S$ is contained in $Z(N_G(S))=N_G(S)$. By the
Burnside normal $p$-complement theorem (see \cite[Th.\ 5.13]{Isa2008})
we obtain that $G$ has a normal $q$-complement $T$ (note that $T$ is also
cyclic). Then $TS=G$, i.e., $G$ is a semidirect product of a cyclic normal
subgroup by a cyclic subgroup of order $q^n$ (G is, in fact, a metacyclic
group). Now, the proof is completed by the remark that $T$ can be chosen to
be of a prime order $p$ (we can replace it by a normal subgroup $T'\leq T$ of
order $p$).

Case II. Now assume that $G$ is a $p$-group. Let $M_1$ be a minimal
normal subgroup of $G$. If there is $M_2\in L(G)$, $M_2\neq M_1$,
with $|M_2|\hspace{1mm}=p$, then $G$ will contain the noncyclic
subgroup $M_1M_2\cong \mathbb{Z}_p \times \mathbb{Z}_p$. It follows
that $G=M_1M_2$ and so $G\cong \mathbb{Z}_p \times \mathbb{Z}_p$. If
$M_1$ is the unique subgroup of order $p$ in $G$, then, by \cite[vol.\ II, eq.\ (4.4)]{Hup1967},
$G$ is isomorphic to a generalized quaternion
group $$Q_{2^n}=\langle a,b \mid a^{2^{n-2}}= b^2, a^{2^{n-1}}=1,
b^{-1}ab=a^{-1}\rangle, \hspace{1mm}n\geq 3.$$Let $H=\langle
a^2,b\rangle\cong Q_{2^{n-1}}$. Clearly, $H$ is a proper non-cyclic
subgroup of $G$ for $n\geq 4$. Hence $n=3$, i.e., $G\cong Q_8$.
\end{proof}

In what follows assume that $G$ and $G'$ are two finite groups. It
is obvious that if $G\cong G'$, then $\cdeg(G)=\cdeg(G')$. In
particular, we infer that any two conjugate subgroups of a finite
group have the same cyclicity degree. We remark that the above conclusion
remains true even in the case when $G$ and $G'$ are only lattice-isomorphic ($L$-isomorphic), i.e.,
there is an isomorphism from $L(G)$ to $L(G')$, because lattice-isomorphisms preserve cyclic
subgroups by \cite[Th.\ 1.2.10]{Sch1994}.

\bigskip
By a direct calculation, one obtains
$$\cdeg(S_3 \times \mathbb{Z}_2)=\frax{5}{8} \ne \frax{5}{6}=\cdeg(S_3)\cdeg(\mathbb{Z}_2)$$
and therefore in general we do not have $\cdeg(G\times
G')=\cdeg(G)\cdeg(G')$. It is clear that a sufficient condition for this equality to hold is
$$\gcd(|G|,|G'|)=1,$$ that is $G$ and
$G'$ are of coprime orders. This can be extended to arbitrary finite
direct products.

\begin{prop} \label{prop_coprime}
Let $(G_i)_{1\le i\le k}$ be a family of finite groups
having coprime orders. Then
$$\cdeg(\prod_{i=1}^k G_i)=\prod_{i=1}^k \cdeg(G_i).$$
\end{prop}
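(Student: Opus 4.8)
The plan is to exploit the coprimality hypothesis to decompose both the subgroup lattice and the poset of cyclic subgroups of the product as direct products of the corresponding objects of the factors. The key structural fact is that when $\gcd(|G_i|,|G_j|)=1$ for $i\neq j$, every subgroup $H$ of $\prod_{i=1}^k G_i$ splits uniquely as $H=\prod_{i=1}^k H_i$ with $H_i\leq G_i$. First I would establish this decomposition: given such an $H$, set $H_i=\pi_i(H)$ where $\pi_i$ is the projection onto the $i$-th factor. The coprimality of the orders forces $H=\prod_{i=1}^k H_i$ because the order of $H$ factors as a product of prime powers that are segregated among the distinct factors, and one checks that $H$ must contain each $H_i$ (viewed inside the product). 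This gives a bijection $L(\prod_i G_i)\cong \prod_i L(G_i)$, and in particular the multiplicative formula $|L(\prod_i G_i)|=\prod_i |L(G_i)|$.

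Next I would verify that this same bijection restricts to a bijection between cyclic subgroups on each side. The relevant elementary fact is that a finite group is cyclic if and only if it is a direct product of cyclic groups of pairwise coprime orders; equivalently, $H=\prod_i H_i$ with the $|H_i|$ pairwise coprime is cyclic if and only if each $H_i$ is cyclic. Thus $H$ is cyclic precisely when every component $H_i$ is cyclic, which yields $C(\prod_i G_i)\cong \prod_i C(G_i)$ and hence $|C(\prod_i G_i)|=\prod_i |C(G_i)|$.

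Combining the two counting identities gives
$$\cdeg\Bigl(\prod_{i=1}^k G_i\Bigr)=\frac{|C(\prod_i G_i)|}{|L(\prod_i G_i)|}=\frac{\prod_i |C(G_i)|}{\prod_i |L(G_i)|}=\prod_{i=1}^k \frac{|C(G_i)|}{|L(G_i)|}=\prod_{i=1}^k \cdeg(G_i),$$
as required. By induction it suffices to treat the case $k=2$, so I would state and prove the subgroup decomposition for a product of two coprime groups and then pass to the general case by induction on $k$, noting that $\gcd(|G_1|,|\prod_{i=2}^k G_i|)=1$ when all the $|G_i|$ are pairwise coprime.

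The main obstacle is the subgroup decomposition lemma itself, i.e.\ showing that every subgroup of a coprime direct product factors through the projections. The inclusion $H\4 \prod_i \pi_i(H)$ is automatic, but the reverse inclusion requires an argument: one shows that each $\pi_i(H)$ is contained in $H$ by using that $|\pi_i(H)|$ is coprime to the index or to the orders of the other projections, so that a suitable power of an element of $H$ lands in the $i$-th factor and generates $\pi_i(H)$. This is the standard internal characterization of when a subgroup of a direct product is itself a direct product (Goursat-type reasoning collapses in the coprime case because the only common quotient of coprime-order groups is trivial), and once it is in place the rest of the proof is bookkeeping.
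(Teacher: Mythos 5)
Your proof is correct and coincides with the standard argument the paper implicitly relies on: the paper states this proposition without proof (calling the $k=2$ case ``clear''), and the intended justification is exactly your decomposition $H=\prod_i \pi_i(H)$ for subgroups of a coprime direct product, the observation that such an $H$ is cyclic iff every $\pi_i(H)$ is, and the resulting multiplicativity of $|L(\cdot)|$ and $|C(\cdot)|$. Your sketch of the key lemma (raising an element of $H$ to the power $|G_2|$, then inverting that exponent modulo the element's order to land each projection inside $H$) is the right and complete way to fill in the one nontrivial step.
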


The following immediate consequence of Proposition
\ref{prop_coprime} shows that computing the cyclicity degree of a
finite nilpotent group is reduced to finite $p$-groups.

\begin{cor} \label{cor_Sylow} If $G$ is a finite nilpotent group and
$(G_i)_{1\le i\le k}$ are the Sylow subgroups of $G$, then
$$\cdeg(G)=\prod_{i=1}^k \cdeg(G_i).$$
\end{cor}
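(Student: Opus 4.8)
The plan is to deduce this directly from Proposition \ref{prop_coprime} by appealing to the structure theory of finite nilpotent groups. First I would invoke the standard characterization: a finite group is nilpotent if and only if it is the internal direct product of its Sylow subgroups. Concretely, if $G$ is nilpotent and $p_1,\dots,p_k$ are the distinct primes dividing $|G|$, then each Sylow $p_i$-subgroup $G_i$ is normal (hence unique), and the multiplication map gives an isomorphism $G\cong\prod_{i=1}^k G_i$. This is precisely the mechanism that reduces the nilpotent case to the $p$-group case, as advertised in the text preceding the corollary.

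Next I would check that the hypothesis of Proposition \ref{prop_coprime} is met. Since $|G_i|=p_i^{a_i}$ with the $p_i$ distinct primes, the orders $|G_1|,\dots,|G_k|$ are pairwise coprime. Applying Proposition \ref{prop_coprime} to the family $(G_i)_{1\le i\le k}$ then yields
$$\cdeg(G)=\cdeg\left(\prod_{i=1}^k G_i\right)=\prod_{i=1}^k \cdeg(G_i),$$
which is the desired identity. The only point requiring a word of justification is the passage from the abstract isomorphism $G\cong\prod_i G_i$ to the numerical equality of cyclicity degrees, but this is immediate from the observation, recorded earlier in this section, that isomorphic groups have equal cyclicity degree.

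There is essentially no genuine obstacle in this corollary: all of the real content is concentrated in Proposition \ref{prop_coprime}, whose coprime-order splitting of subgroup lattices is what makes the factorization work, while the present step is merely the application of a classical decomposition. If I were to flag anything requiring care, it would simply be the need to record that each Sylow subgroup of a nilpotent group is normal and therefore canonically determined, so that the decomposition $G=\prod_i G_i$ is well-defined and the right-hand product is unambiguous.
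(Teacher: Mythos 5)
Your proof is correct and follows exactly the route the paper intends: the paper states the corollary as an immediate consequence of Proposition \ref{prop_coprime}, relying (without spelling it out) on the same decomposition of a finite nilpotent group as the direct product of its Sylow subgroups, whose orders are pairwise coprime prime powers. Your write-up simply makes explicit the standard details the paper leaves implicit, including the invariance of $\cdeg$ under isomorphism.
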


In order to determine the cyclicity degree of a finite group $G$ it
is essential to find the number of its cyclic subgroups and this is
strongly connected with the number of elements of certain orders in
$G$. For every divisor $d$ of $n$ let $c(d)=|\{H\in C(G): |H|=d \}|$.
Then
\begin{equation} \label{eq_G}
|G|= \sum_{d\mid n}\, c(d) \varphi(d),
\end{equation}
where $\varphi$ is Euler's totient function, and
\begin{equation} \label{eq_C_G}
|C(G)|=\dd \sum_{d\mid n}\, c(d).
\end{equation}

In many particular cases the above two identities lead to a precise
expression of $|C(G)|$. For example, if $G=\Z_p^k$ is the elementary abelian
$p$-group of rank $k$ (that is $|G|=p^k$), then we can easily obtain
$c(1)=1$, $c(p)=\frax{p^k-1}{p-1}$ and $c(p^2)=c(p^3)= \ldots =
c(p^k)=0$. So, $|C(\mathbb{Z}_p)|=2$ and for $k\ge 2$ we have
$$|C(\mathbb{Z}_p^k)|=2+p+p^2+\ldots+p^{k-1}.$$

Furthermore, as it is well known, $|L(\mathbb{Z}_p^k)|=\sum_{j=0}^k \left[{k \atop j} \right]_p$ in terms of the Gaussian coefficients.
This can be written in an alternate form, see \cite[Prop.\ 2.12]{Tar2007}), giving the following identity for $\cdeg(\mathbb{Z}_p^k)$.

\begin{prop} \label{cdeg_Z_p_k} The cyclicity degree of the elementary abelian $p$-group
$\mathbb{Z}_p^k$ ($k\ge 2$) is given by
$$\cdeg(\mathbb{Z}_p^k)=\frax{2+p+p^2+\ldots+p^{k-1}}{2+\displaystyle\sum_{\a=1}^{k-1}
\displaystyle\sum_{1\le i_1<i_2<\ldots <i_\a\le k}p^{i_1+i_2+\ldots
+i_\a-\frac{\a(\a+1)}2}}\hspace{1mm}.$$
\end{prop}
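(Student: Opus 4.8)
The plan is to assemble the formula for $\cdeg(\mathbb{Z}_p^k)$ directly from its two ingredients: the numerator, which is $|C(\mathbb{Z}_p^k)|$, and the denominator, which is $|L(\mathbb{Z}_p^k)|$. The numerator has already been computed in the text immediately preceding the statement, where the count $c(1)=1$, $c(p)=(p^k-1)/(p-1)$, and $c(p^i)=0$ for $i\ge 2$ is established via the identities \eqref{eq_G} and \eqref{eq_C_G}, giving $|C(\mathbb{Z}_p^k)|=2+p+p^2+\cdots+p^{k-1}$ for $k\ge 2$. So the real work is entirely in rewriting the denominator $|L(\mathbb{Z}_p^k)|$ in the displayed summation form.

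\smallskip

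\noindent First I would recall that the subgroups of $\mathbb{Z}_p^k$ are exactly the $\mathbb{F}_p$-subspaces of a $k$-dimensional vector space over $\mathbb{F}_p$, so $|L(\mathbb{Z}_p^k)|=\sum_{j=0}^k \left[{k\atop j}\right]_p$, the sum of Gaussian binomial coefficients, as already noted in the text. The isolated $j=0$ and $j=k$ terms each contribute $1$, accounting for the leading constant $2$ in the denominator. The remaining task is to show that $\sum_{j=1}^{k-1}\left[{k\atop j}\right]_p$ equals $\sum_{\alpha=1}^{k-1}\sum_{1\le i_1<\cdots<i_\alpha\le k} p^{\,i_1+\cdots+i_\alpha-\alpha(\alpha+1)/2}$. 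The cleanest route is to match term by term for each $\alpha=j$: I would establish the single Gaussian-coefficient identity
$$\left[{k\atop \alpha}\right]_p=\sum_{1\le i_1<i_2<\cdots<i_\alpha\le k} p^{\,i_1+i_2+\cdots+i_\alpha-\frac{\alpha(\alpha+1)}{2}}.$$
This is a standard expansion, and since the excerpt explicitly cites \cite[Prop.\ 2.12]{Tar2007} for precisely this alternate form, I would invoke that reference rather than reprove the identity from scratch.

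\smallskip

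\noindent The main step, if one wanted a self-contained argument, is verifying that Gaussian-coefficient identity, and the natural proof is combinatorial: $\left[{k\atop \alpha}\right]_p$ counts $\alpha$-dimensional subspaces of $\mathbb{F}_p^k$, and one can stratify these by the positions of the pivot columns $i_1<\cdots<i_\alpha$ of the reduced row-echelon form of a spanning matrix. The number of subspaces with a fixed pivot pattern is a power of $p$ determined by the count of free entries, which works out to exactly $p^{\,i_1+\cdots+i_\alpha-\alpha(\alpha+1)/2}$ once one shifts the pivot index convention appropriately. I expect the one subtlety to be bookkeeping the index convention (whether pivots are labelled $1$ through $k$ or $0$ through $k-1$) so that the exponent lands exactly as written with the $-\alpha(\alpha+1)/2$ correction; everything else is routine. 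Dividing the numerator by the denominator then yields the claimed expression, completing the proof.
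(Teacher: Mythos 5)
Your proposal is correct and takes essentially the same route as the paper, which establishes the proposition in exactly this way: the numerator $|C(\mathbb{Z}_p^k)|=2+p+\cdots+p^{k-1}$ is taken from the element-order count in the preceding text, and the denominator is $|L(\mathbb{Z}_p^k)|=\sum_{j=0}^{k}\left[{k\atop j}\right]_p$, with the $j=0$ and $j=k$ terms giving the constant $2$ and the alternate form $\left[{k\atop\alpha}\right]_p=\sum_{1\le i_1<\cdots<i_\alpha\le k}p^{\,i_1+\cdots+i_\alpha-\alpha(\alpha+1)/2}$ simply cited from \cite[Prop.\ 2.12]{Tar2007}. Your optional self-contained verification of that identity is also sound --- the pivot-pattern stratification gives $\alpha k-\sum_t j_t-\alpha(\alpha-1)/2$ free entries, which matches the stated exponent after the index reversal $i_t=k+1-j_{\alpha+1-t}$, or equivalently one can substitute $\lambda_j=i_j-j$ to recover the classical partitions-in-a-box expansion of the Gaussian coefficient --- but the paper itself stops at the citation, as your primary argument does.
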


In particular,
\begin{equation*}
\cdeg(\mathbb{Z}_p)=1, \quad \cdeg(\mathbb{Z}_p^2)=\frac{p+2}{p+3},
\quad \cdeg(\mathbb{Z}_p^3)=\frac{1}{2},
\end{equation*}
\begin{equation*}
\cdeg(\mathbb{Z}_p^4)=\frac{p^3+p^2+p+2}{p^4+3p^3+4p^2+3p+5}.
\end{equation*}

Observe that $\lim_{p\to \infty} \cdeg(\mathbb{Z}_p^4) =0$.
This shows the following property.

\begin{cor} $\inf\{\,\cdeg(G): G \, \text{\rm is a finite group} \}=0.$
\end{cor}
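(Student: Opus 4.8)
The plan is to combine the universal positivity of cyclicity degrees with the explicit family computed just above. First I would recall the elementary bound $0 < \cdeg(G) \le 1$ noted at the start of Section \ref{sect_basic}: since $L(G)$ always contains the trivial subgroup and $C(G)$ is nonempty, both $|C(G)|$ and $|L(G)|$ are positive, so $\cdeg(G) > 0$ for every finite group $G$. This shows at once that the infimum in question is bounded below by $0$, and moreover that it can only be approached asymptotically rather than attained.

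Next I would exhibit a sequence of finite groups whose cyclicity degrees tend to $0$. The natural candidates are the elementary abelian groups $\mathbb{Z}_p^4$, for which Proposition \ref{cdeg_Z_p_k} supplies the closed form $\cdeg(\mathbb{Z}_p^4) = (p^3+p^2+p+2)/(p^4+3p^3+4p^2+3p+5)$. Since the numerator is a polynomial of degree three in $p$ while the denominator has degree four, letting $p$ range over the primes yields $\cdeg(\mathbb{Z}_p^4) \to 0$ as $p \to \infty$. As each $\mathbb{Z}_p^4$ is a finite group, the infimum is at most $\cdeg(\mathbb{Z}_p^4)$ for every prime $p$, and hence at most $0$.

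Combining the lower bound $\inf \ge 0$ with the upper bound $\inf \le 0$ forces the infimum to equal $0$. There is essentially no hard step here: all the substantive content lives in Proposition \ref{cdeg_Z_p_k}, and the corollary merely reads off the difference in degrees between numerator and denominator of that rational function. The only subtlety worth flagging is that the infimum is not a minimum; because $\cdeg(G) > 0$ for every finite $G$, no single finite group realizes the value $0$, which is approached only in the limit along the family $\mathbb{Z}_p^4$.
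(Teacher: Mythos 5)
Your proof is correct and takes essentially the same route as the paper, which deduces the corollary precisely from the observation $\lim_{p\to\infty}\cdeg(\mathbb{Z}_p^4)=0$, read off from the explicit formula in Proposition \ref{cdeg_Z_p_k}, combined with the universal bound $0<\cdeg(G)\le 1$. Your additional remark that the infimum is not attained is a correct (if unstated in the paper) refinement.
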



\section{Cyclicity degrees for some \\ classes of finite groups} \label{sect_classes}

\counterwithout{theorem}{section}
\counterwithin{theorem}{subsection}

In this section we determine explicitly the cyclicity degree of
several finite groups. The most significant results are obtained for
abelian groups, hamiltonian groups, $p$-groups possessing a cyclic
maximal subgroup and ZM-groups.

\subsection{The cyclicity degree of abelian groups} \label{subsect_abelian}

According to Proposition \ref{prop_coprime}, our study can be reduced
to finite abelian $p$-groups. However, we present first the
next result concerning the group $\Z_m \times \Z_n$. It was proved in \cite[Th.\ 3 and 5]{HHTW2012} that for every $m,n\in\N^*$
the number of cyclic subgroups of $\Z_m \times \Z_n$ and the total number of its subgroups is given by the identities
\begin{equation} \label{C_m_n}
cs(m,n):=|C(\Z_m \times \Z_n)| = \sum_{a\mid m, \, b\mid n}
\varphi(\gcd(a,b))
\end{equation}
and
\begin{equation} \label{L_m_n}
s(m,n):=|L(\Z_m \times \Z_n)| = \sum_{a\mid m, \, b\mid n}
\gcd(a,b).
\end{equation}

The identity \eqref{L_m_n} is a special case of the more general result of \cite{Cal1987}. We deduce  the following formula.

\begin{theorem} \label{theor_cdeg_2} The cyclicity degree of the group $\Z_m \times \Z_n$
($m,n\in \N^*$) is
\begin{equation} \label{cdeg_rank_2}
\cdeg(\Z_m \times \Z_n) = \frac{cs(m,n)}{s(m,n)}
\end{equation}
where $cs(m,n)$ and $s(m,n)$ are given by \eqref{C_m_n} and \eqref{L_m_n}, respectively.
\end{theorem}

We note that for every $n_1,\ldots, n_k\in \N^*$,
\begin{equation} \label{C_gen}
|C(\Z_{n_1} \times \cdots \times \Z_{n_k})| = \sum_{a_1\mid n_1,\ldots, a_k\mid n_k}
\frac{\varphi(a_1)\cdots \varphi(a_k)}{\varphi(\lcm(a_1,\ldots,a_k))},
\end{equation}
which was proved using the orbit counting lemma (Burnside's lemma) in \cite{Tot2011} and by simple
number-theoretic arguments in \cite{Tot2012}.

A formula for $\cdeg(\Z_m \times \Z_n\times \Z_r)$ ($m,n,r\in \N^*$), which is similar to \eqref{cdeg_rank_2} follows at once from
\eqref{C_gen} applied for $k=3$ and from the identity derived for $|L(\Z_m \times \Z_n \times \Z_r)|$ in \cite[Th.\ 2.2]{HamTot2013}.

Consider now finite abelian $p$-groups. By the fundamental theorem of finitely generated abelian groups, such a
group has a direct decomposition of type
$$\mathbb{Z}_{p^{\a_1}}\times\mathbb{Z}_{p^{\a_2}}\times \cdots \times\mathbb{Z}_{p^{\a_k}},$$
where $p$ is a prime and $1\le\a_1\le\a_2\le...\le\a_k.$
\bigskip

Assume that $k=2$. Then we have the next result.

\begin{cor} \label{cor_cdeg_2} The cyclicity degree of the abelian $p$-group $\mathbb{Z}_{p^{\a_1}}\times \mathbb{Z}_{p^{\a_2}}$ is
given by the following identity:
\begin{equation*}
\cdeg(\mathbb{Z}_{p^{\a_1}}\times\mathbb{Z}_{p^{\a_2}})
\end{equation*}
\begin{equation*}
= \frac{(\a_2-\a_1+1)p^{\a_1+2}-2(\a_2-\a_1)p^{\a_1+1}+(\a_2-\a_1-1)p^{\a_1}-2p+2}
{(\a_2-\a_1+1)p^{\a_1+2} - (\a_2-\a_1-1)p^{\a_1+1}-(\a_1+\a_2+3)p+ (\a_1+\a_2+1)}.
\end{equation*}
\end{cor}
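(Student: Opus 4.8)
The plan is to specialize Theorem~\ref{theor_cdeg_2} to the case $m=p^{\a_1}$, $n=p^{\a_2}$ (with $\a_1\le\a_2$) and evaluate the two arithmetic sums $cs(p^{\a_1},p^{\a_2})$ and $s(p^{\a_1},p^{\a_2})$ explicitly. First I would parametrize the divisors: every divisor of $p^{\a_i}$ is of the form $p^{e}$ with $0\le e\le\a_i$, so the double sums in \eqref{C_m_n} and \eqref{L_m_n} become double sums over exponents $i\in\{0,\dots,\a_1\}$ and $j\in\{0,\dots,\a_2\}$. Since $\gcd(p^i,p^j)=p^{\min(i,j)}$, and $\varphi(p^t)=p^t-p^{t-1}$ for $t\ge 1$ while $\varphi(1)=1$, both quantities reduce to sums of $p^{\min(i,j)}$ (for the subgroup count) and of $\varphi(p^{\min(i,j)})$ (for the cyclic-subgroup count) over the rectangle $[0,\a_1]\times[0,\a_2]$.

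The key computational step is to evaluate $\sum_{i=0}^{\a_1}\sum_{j=0}^{\a_2} p^{\min(i,j)}$ in closed form. I would split the rectangle according to whether $j\ge i$ or $j<i$: for each fixed $i\le\a_1$, the number of $j$ with $0\le j\le\a_2$ and $j\ge i$ is $\a_2-i+1$, while summing the remaining terms over $j<i$ contributes $\sum_{j=0}^{i-1}p^{j}$. This turns the double sum into a single sum of the form $\sum_{i=0}^{\a_1}\bigl[(\a_2-i+1)p^{i}+\frac{p^{i}-1}{p-1}\bigr]$, which I would evaluate using the standard formulas for $\sum p^{i}$ and $\sum i\,p^{i}$ (the latter obtained by differentiating the geometric series or by a telescoping Abel-summation argument). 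The same bookkeeping, applied to $\varphi(p^{\min(i,j)})$, yields $cs(p^{\a_1},p^{\a_2})$; here one must handle the $\min(i,j)=0$ terms separately since $\varphi(1)=1$ rather than $0$.

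The hard part will be the algebraic consolidation: after evaluating the geometric and arithmetic-geometric sums one obtains rational expressions in $p$ with coefficients that are polynomials in $\a_1$ and $\a_2$, and these must be massaged into the exact normalized forms displayed in the statement, with numerator and denominator written in terms of $p^{\a_1}$ and the differences $\a_2-\a_1$. I expect the cleanest route is to express everything in terms of $u=\a_2-\a_1$ and $p^{\a_1}$, clear the common factor $(p-1)$ arising from the geometric sums, and verify that the leading and constant terms match. To reduce the risk of sign errors I would cross-check the final formula at small values, for instance at $\a_1=\a_2=1$ (recovering $\cdeg(\Z_p\times\Z_p)=\frac{p+2}{p+3}$ from Proposition~\ref{cdeg_Z_p_k}) and at $\a_1=1$, $\a_2=2$, before asserting the general identity.
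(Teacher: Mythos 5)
Your proposal is correct and matches the paper's own proof: the paper proves Corollary~\ref{cor_cdeg_2} precisely by specializing Theorem~\ref{theor_cdeg_2} to $m=p^{\a_1}$, $n=p^{\a_2}$ and ``computing the values of the corresponding sums,'' which is exactly the divisor-parametrization and $\gcd(p^i,p^j)=p^{\min(i,j)}$ calculation you outline (the paper also notes, as an alternative, that one may instead cite the closed forms for $|C|$ and $|L|$ from \cite[Th.\ 3.3, 4.2]{Tar2010}). Your sanity checks, e.g.\ recovering $\cdeg(\Z_p\times\Z_p)=\frac{p+2}{p+3}$ at $\a_1=\a_2=1$, confirm the normalization by the common factor $(p-1)^2$ in numerator and denominator.
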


\begin{proof} Follows from Theorem \ref{theor_cdeg_2} by computing the values of the corresponding sums. Alternatively, one can use the
explicit formulas
\begin{equation*}
|C(\Z_{p^{\a_1}} \times \Z_{p^{\a_2}})|
= 2+2p+\ldots +2p^{\a_1-1} +(\a_2-\a_1+1)p^{\a_1},
\end{equation*}
\begin{equation*}
|L(\Z_{p^{\a_1}} \times \Z_{p^{\a_2}})|
\end{equation*}
\begin{equation*}
=\frac{1}{(p{-}1)^2}((\a_2{-}\a_1{+}1)p^{\a_1{+}2}{-}(\a_2{-}\a_1{-}1)p^{\a_1{+}1}{-}(\a_1{+}\a_2{+}3)p{+}(\a_1{+}\a_2{+}1))
\end{equation*}
obtained in \cite[Th. 3.3, 4.2]{Tar2010} by different arguments.
\end{proof}

For an arbitrary $k$, a similar formula is more difficult to obtain. It is well-known that
$|L(\mathbb{Z}_{p^{\a_1}}\times\mathbb{Z}_{p^{\a_2}}\times \cdots \times\mathbb{Z}_{p^{\a_k}})|$
can recursively be computed. Explicit formulas are known only in certain particular cases,
as that in Proposition \ref{cdeg_Z_p_k}. Another example of an explicit formula is that for $|L(\Z_{p^{\a}} \times \Z_{p^{\a}}
\times \Z_{p^{\a}})|$ ($\a\in \N^*$), given in \cite[Remark\
2.1]{HamTot2013}. At the same time,
$$|C(\mathbb{Z}_{p^{\a_1}}\times \mathbb{Z}_{p^{\a_2}}\times \cdots \times\mathbb{Z}_{p^{\a_k}})|=
\dd\sum_{\a=1}^{\a_k}\frax{p^\a h^{k-1}_p(\a)-p^{\a-1}h^{k-1}_p(\a-1)}{p^\a-p^{\a-1}}\hspace{1mm},$$where
$$h^{k-1}_p(\a)=\left\{\barr{lll}
p^{(k-1)\a},&\mbox{ if }&0\le\a\le\a_1;\vspace*{1,5mm}\\
p^{(k-2)\a+\a_1},&\mbox{ if }&\a_1\le\a\le\a_2;\\
\vdots\\
p^{\a_1+\a_2+\ldots +\a_{k-1}},&\mbox{ if
}&\a_{k-1}\le\a\earr\right.$$
(see \cite[Th.\ 4.3]{Tar2010}). More precisely, one obtains
$$|C(\mathbb{Z}_{p^{\a_1}}\times\mathbb{Z}_{p^{\a_2}}\times \cdots \times\mathbb{Z}_{p^{\a_k}})|=1+(\a_k-\a_{k-1})p^{\a_1+\a_2+\ldots+\a_{k-1}}+$$
$$+\frax{1}{p-1}\dd\sum_{i=1}^{k-1}p^{\a_1+\a_2+\cdots+\a_{i-1}}\frax{p^{k-i+1}}{p^{k-i}}\left(p^{(k-i)\a_i}-p^{(k-i)\a_{i-1}}\right).$$
\smallskip

Since the cyclic subgroup structure of a finite group is preserved
by $L$-isomorphisms, we remark that the above formulas can be used
to compute the cyclicity degree of groups which are $L$-isomorphic
with finite abelian groups. For example, the cyclicity degrees of
$P$-groups (see \cite[Sect.\ 2.2]{Sch1994}) can be found by
Proposition \ref{cdeg_Z_p_k}.

We close this subsection with the following properties. According to Proposition \ref{prop_coprime} the function
$n\mapsto f(n):=\cdeg(\Z_n \times \Z_n)\in (0,1]$ ($n\in \N^*$) is multiplicative, that is $f(mn)=f(m)f(n)$ for every
$m,n\in \N^*$ with $\gcd(m,n)=1$. Furthermore, by Corollary \ref{cor_cdeg_2} for every prime power $p^{\a}$ ($\a\in \N^*$),
\begin{equation*}
f(p^{\a})= \frac{p^{\a+2}-p^{\a}-2p+2}
{p^{\a+2} + p^{\a+1}-(2\a+3)p+ (2\a+1)}.
\end{equation*}

Remark that $\lim_{p\to \infty} f(p^{\a})=1$ for every fixed $\a \in \N^*$ and $\lim_{\a \to \infty} f(p^{\a})=1-1/p$ for every fixed prime $p$.
Since the series taken over the primes
\begin{equation*}
\sum_p \frac{1-f(p)}{p}= \sum_p \frac1{p(p+3)}
\end{equation*}
is convergent, it follows from the theorem of Delange (see, e.g., \cite{Pos1988}) that the function $f$ has a non-zero mean value $M$ given by
\begin{equation*}
M:=\lim_{x\to \infty} \frac1{x} \sum_{n\le x} f(n) = \prod_p \left(1-\frac1{p}\right) \sum_{a=0}^{\infty} \frac{f(p^a)}{p^a},
\end{equation*}
the product being over the primes. Here $M\approx 0.742$.

More exactly, the following asymptotic formula holds, for which we give a self contained proof.

\begin{theorem}
\begin{equation*}
\sum_{n\le x} \cdeg(\Z_n \times \Z_n) =  M x  + O(\log^3 x).
\end{equation*}
\end{theorem}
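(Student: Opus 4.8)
The plan is to exploit the multiplicativity of $f(n):=\cdeg(\Z_n\times\Z_n)$ together with the classical convolution (hyperbola) method, which makes the constant $M$ explicit and keeps the argument self-contained. Writing $g:=\mu * f$ for the Dirichlet convolution of the M\"obius function with $f$, the function $g$ is multiplicative and $f=\mathbf 1 * g$, i.e. $f(n)=\sum_{d\mid n}g(d)$. Interchanging the order of summation gives $\sum_{n\le x}f(n)=\sum_{d\le x}g(d)\lfloor x/d\rfloor$, and splitting $\lfloor x/d\rfloor=x/d-\{x/d\}$ yields
\[
\sum_{n\le x}f(n)=x\sum_{d\le x}\frac{g(d)}{d}-\sum_{d\le x}g(d)\Bigl\{\frac{x}{d}\Bigr\}.
\]
First I would check that $\sum_{d\ge1}g(d)/d$ converges absolutely and equals $M$: its Euler product is $\prod_p(1-1/p)\sum_{a\ge0}f(p^a)/p^a$, which is exactly the Delange constant recorded before the theorem. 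Replacing $\sum_{d\le x}g(d)/d$ by $M-\sum_{d>x}g(d)/d$ then produces the main term $Mx$, and the whole error is bounded by $x\sum_{d>x}|g(d)|/d+\sum_{d\le x}|g(d)|$.

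Everything therefore reduces to one estimate: the partial sums $\sum_{d\le x}|g(d)|$. Here I would use the formula for $f(p^{\alpha})$ displayed before the theorem (equivalently Corollary~\ref{cor_cdeg_2} with $\alpha_1=\alpha_2=\alpha$). A short simplification of its numerator gives the clean identity $1-f(p^{\alpha})=\frac1p-\frac{(2\alpha+1)(p-1)^2}{p\,D_{\alpha}}$, where $D_{\alpha}=p^{\alpha+2}+p^{\alpha+1}-(2\alpha+3)p+(2\alpha+1)$ is the denominator occurring there. Since $g(p^{a})=f(p^{a})-f(p^{a-1})$, this reduces $g(p^a)$ to the difference $\frac{(p-1)^2}{p}\bigl(\frac{2a+1}{D_a}-\frac{2a-1}{D_{a-1}}\bigr)$, from which a direct computation yields a uniform bound of the shape $|g(p^{a})|\ll a\,p^{-a}$, valid for every prime $p$ and every $a\ge1$; in particular $|g(p)|=1/(p+3)$, so $\sum_{a\ge1}|g(p^{a})|=1/p+O(1/p^{2})$, and the absolute convergence at $s=1$ follows from $\sum_{a}|g(p^a)|/p^a\ll 1/p^2$.

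Because $g$ is multiplicative, so is $|g|$, and hence $\sum_{d\le x}|g(d)|\le\prod_{p\le x}\bigl(1+\sum_{a\ge1}|g(p^{a})|\bigr)$; taking logarithms and applying Mertens' theorem to $\sum_{p\le x}1/p=\log\log x+O(1)$ shows this product is $O(\log x)$, the coefficient of $1/p$ in $\sum_a|g(p^a)|$ being exactly $1$. A partial summation with $G(t):=\sum_{d\le t}|g(d)|=O(\log t)$ then gives $x\sum_{d>x}|g(d)|/d=O(\log x)$ as well, so the total error is $O(\log x)$, which already implies the stated $O(\log^3 x)$ (indeed, even the crude uniform bound $\sum_a|g(p^a)|\le 3/p$ would suffice for the $\log^3$ exponent). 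The main obstacle is precisely the uniform control of $g(p^{a})$: one must track the dependence on both $a$ and $p$ in $f(p^{a})-f(p^{a-1})$ --- especially for the small primes $p=2,3$ and as $a\to\infty$ --- to be certain that the local sums $\sum_{a}|g(p^{a})|$ really are $1/p+O(1/p^2)$, which is exactly what powers the Mertens estimate and pins down the correct power of $\log x$.
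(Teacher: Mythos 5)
Your proposal is correct, and it follows the paper's skeleton exactly up to the error analysis: like the authors, you set $g=\mu * f$, write $\sum_{n\le x}f(n)=\sum_{d\le x}g(d)\lfloor x/d\rfloor$, extract the main term $Mx$ via the Euler product, and are left with the two error sums $x\sum_{d>x}|g(d)|/d$ and $\sum_{d\le x}|g(d)|$. Where you genuinely diverge is in bounding these. The paper establishes the pointwise bound $|g(p^a)|<\frac{2a-1}{p^a}$ (asserted by ``direct computations''), deduces $|g(n)|\le \tau(n^2)/n$ since $\tau(p^{2a})=2a+1$, and then invokes the known asymptotic $\sum_{n\le x}\tau(n^2)=cx\log^2 x+O(x\log x)$ with partial summation, which yields exactly the stated $O(\log^3 x)$. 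You instead exploit multiplicativity of $|g|$ directly. Your key identity checks out: $D_\alpha-(p^{\alpha+2}-p^\alpha-2p+2)=p^{\alpha+1}+p^\alpha-(2\alpha+1)p+(2\alpha-1)$, which is indeed $\frac{1}{p}\left(D_\alpha-(2\alpha+1)(p-1)^2\right)$, and since $D_0=(p-1)^2$ and $f(1)=1$ the telescoped form of $g(p^a)$ is consistent down to $a=1$ (it reproduces $g(p)=-\frac{1}{p+3}$, as the bracket's numerator at $a=1$ collapses to $-p(p-1)^2$). For general $a$ that numerator is $-(2a-1)p^{a+2}+2p^{a+1}+(2a+1)p^a-4p$, and with $D_aD_{a-1}\gg p^{2a+3}$ uniformly (including $p=2$, where $D_a=3\cdot 2^{a+1}-(2a+5)\gg 2^{a+2}$) one gets your uniform $|g(p^a)|\ll a\,p^{-a}$ --- so the ``main obstacle'' you flag is genuinely surmountable, and in fact a sign check shows $g(p^a)<0$ for all $a\ge 1$, so the local sums telescope to exactly $\sum_{a\ge 1}|g(p^a)|=1-\lim_{a\to\infty}f(p^a)=\frac{1}{p}$, even better than your $\frac{1}{p}+O(\frac{1}{p^2})$. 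Feeding this into $\sum_{d\le x}|g(d)|\le\prod_{p\le x}\bigl(1+\sum_{a\ge1}|g(p^a)|\bigr)$ and Mertens gives $O(\log x)$ for both error sums, so your route proves the theorem with the strictly sharper error term $O(\log x)$, at the cost of an explicit local computation; the paper's divisor-function majorant is blunter but offloads all the work onto the standard $\tau(n^2)$ asymptotics.
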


\begin{proof} Let $f(n)=\sum_{d\mid n} g(d)$ ($n\in \N^*$), that is $g=\mu*f$ in terms of the Dirichlet convolution, where $\mu$ is the
M\"{o}bius function. Here $g(p^a)=f(p^a)-f(p^{a-1})$ ($a\in \N^*$), in particular,
\begin{equation*}
g(p)= - \frac1{p+3}, \quad g(p^2)= -\frac{3p+4}{(p+3)(p^2+3p+5)}.
\end{equation*}

It follows that $|g(p)|< \frac1{p}$, $|g(p^2)|< \frac{3}{p^2}$ for every prime $p$, and direct computations show that
$|g(p^a)|<\frac{2a-1}{p^a}$ for every prime power $p^a$ ($a\in \N^*$). Hence, $|g(n)|\le \tau(n^2)/n$ for every
$n\in \N^*$, where $\tau(k)$ stands for the number of positive divisors of $k$.

Now,
\begin{equation*}
\sum_{n\le x} f(n)= \sum_{de\le x} g(d)= \sum_{d\le x} \sum_{e\le x/d} 1= \sum_{d\le x} \left( x/d+O(1)\right)
\end{equation*}
\begin{equation*}
=x \sum_{d=1}^{\infty} \frac{g(d)}{d} + O\left(x\sum_{d>x} \frac{|g(d)|}{d}\right) + O\left(\sum_{d\le x} |g(d)|\right)
\end{equation*}
\begin{equation*}
=M x  + O\left(x\sum_{d>x} \frac{\tau(d^2)}{d^2}\right) + O\left(\sum_{d\le x} \frac{\tau(d^2)}{d}\right),
\end{equation*}
where in the main term the coefficient of $x$ is $M$ by the Euler's product formula. It is known that $\sum_{n\le x}
\tau(n^2)= cx\log^2 x+O(x\log x)$ with a certain constant $c$ and partial summation shows that
$\sum_{n>x} \tau(n^2)/n^2=O((\log^2 x)/x)$, $\sum_{n\le x} \tau(n^2)/n=O(\log^3 x)$.
\end{proof}

\subsection{The cyclicity degree of hamiltonian groups}

An important class of finite groups which are closely connected to
abelian groups consists of hamiltonian groups, that is nonabelian
groups all of whose subgroups are normal. The structure of such a
group $H$ is well-known, namely $$H\cong Q_8\times\mathbb{Z}_2^n
\times A$$where $A$ is an abelian group of odd order. By Proposition \ref{prop_coprime} we infer that
$$\cdeg(H)=\cdeg(Q_8 \times
\mathbb{Z}_2^n) \cdeg(A),$$ which shows that the computation of
$\cdeg(H)$ is reduced to the computation of $\cdeg(Q_8 \times
\mathbb{Z}_2^n)$. The number of subgroups of $Q_8 \times
\mathbb{Z}_2^n$ has been determined in \cite{Tar2013}:
$$|L(Q_8\times\mathbb{Z}_2^n)|{=}\hspace{1mm}b_{n,2}{=}\hspace{1mm}2^{n+2}{+}1{+}8
\sum_{\a=0}^{n-2}(2^{n-\a}{-}2^{2\a+1}{+}2^{\a})a_{\a,2}{+}2^{n+2}a_{n-1,2}{+}a_{n,2},$$where
$a_{\a,2}=|L(\mathbb{Z}_2^{\a})|$, for all $\a\in\mathbb{N}^*$.
Moreover, the equalities \eqref{eq_G} and \eqref{eq_C_G} become in this case
$$2^{n+3}=\dd\sum_{d\mid 2^{n+3}}\hspace{1mm} c(d) \varphi(d) \hspace{1mm}\mbox{
and } |C(Q_8\times\mathbb{Z}_2^n)|=\dd\sum_{d\mid
2^{n+3}}\hspace{1mm} c(d),$$
respectively. Since $c(1)=1$ and
$c(2^i)=0$, for all $i\geq 3$, one obtains
$$2^{n+3}=1+c(2)+2c(4) \hspace{1mm}\mbox{ and }
|C(Q_8 \times\mathbb{Z}_2^n)|=1+c(2)+c(4).$$

These show that
$$|C(Q_8 \times\mathbb{Z}_2^n)|=2^{n+3}-c(4).$$

Clearly, the cyclic subgroups of order 4 of $Q_8 \times \mathbb{Z}_2^n$ are of
type $\langle(x,y)\rangle$, where $x\in Q_8$ has order 4 and $y\in
\mathbb{Z}_2^n$ is arbitrary. We infer that $c(4)=3\cdot 2^n$ and
so $$|C(Q_8 \times\mathbb{Z}_2^n)|=5\cdot 2^n.$$

Hence we have proved the following result.

\begin{theorem} The cyclicity degree of the
hamiltonian group $H\cong Q_8 \times \mathbb{Z}_2^n \times A$ is
given by the following equality:
$$ \cdeg(H)=\frax{5\cdot 2^n}{b_{n,2}}\hspace{1mm} \cdeg(A).$$
\end{theorem}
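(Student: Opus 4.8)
The plan is to exploit the coprime factorisation already available and then reduce the entire computation to counting the cyclic subgroups of $Q_8 \times \mathbb{Z}_2^n$. First I would note that $|Q_8 \times \mathbb{Z}_2^n| = 2^{n+3}$ is coprime to $|A|$, since $A$ has odd order; hence Proposition \ref{prop_coprime} gives $\cdeg(H) = \cdeg(Q_8 \times \mathbb{Z}_2^n)\,\cdeg(A)$, and it remains to establish $\cdeg(Q_8 \times \mathbb{Z}_2^n) = (5\cdot 2^n)/b_{n,2}$. Because the denominator $b_{n,2} = |L(Q_8 \times \mathbb{Z}_2^n)|$ is already supplied by the formula quoted from \cite{Tar2013}, the actual work is to prove that $|C(Q_8 \times \mathbb{Z}_2^n)| = 5\cdot 2^n$.

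For the numerator I would apply the two identities \eqref{eq_G} and \eqref{eq_C_G} to the group $Q_8 \times \mathbb{Z}_2^n$ of order $2^{n+3}$. The key structural observation is that this group has exponent $4$: an element $(x,y)$ has order $\lcm(|x|,|y|)$ with $|x|\in\{1,2,4\}$ and $|y|\in\{1,2\}$, so no element has order exceeding $4$. Consequently $c(2^i)=0$ for all $i\ge 3$, and together with $c(1)=1$ the two summation identities collapse to the linear system $2^{n+3}=1+c(2)+2c(4)$ and $|C(Q_8 \times \mathbb{Z}_2^n)|=1+c(2)+c(4)$. Subtracting eliminates $c(2)$ and yields $|C(Q_8 \times \mathbb{Z}_2^n)| = 2^{n+3}-c(4)$.

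The one remaining step, and the only place requiring a genuine count, is the evaluation of $c(4)$. A cyclic subgroup of order $4$ is generated by an element $(x,y)$ of order $4$, which by the exponent analysis forces $|x|=4$ in $Q_8$ while $y$ ranges freely over the $2^n$ elements of $\mathbb{Z}_2^n$. Counting such generators and dividing by $\varphi(4)=2$ generators per cyclic subgroup gives $c(4)=3\cdot 2^n$, where the factor $3$ is precisely the number of cyclic subgroups of order $4$ in $Q_8$. Substituting back produces $|C(Q_8 \times \mathbb{Z}_2^n)| = 2^{n+3}-3\cdot 2^n = 5\cdot 2^n$, and combining this with the coprime factorisation gives the asserted formula. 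I expect the only subtlety to be this bookkeeping for $c(4)$: one must verify that every order-$4$ element comes from an order-$4$ component in $Q_8$, which is exactly the point at which the exponent-$2$ hypothesis on $\mathbb{Z}_2^n$ is used. Everything else is a direct substitution into the identities already in place.
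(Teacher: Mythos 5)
Your proposal is correct and matches the paper's own proof essentially step for step: the coprime reduction via Proposition \ref{prop_coprime}, the specialization of the identities \eqref{eq_G} and \eqref{eq_C_G} using $c(1)=1$ and $c(2^i)=0$ for $i\ge 3$, the elimination of $c(2)$ to get $|C(Q_8\times\mathbb{Z}_2^n)|=2^{n+3}-c(4)$, and the count $c(4)=3\cdot 2^n$ from generators $(x,y)$ with $|x|=4$ and $y$ arbitrary. Your explicit justification of $c(4)$ by dividing the $6\cdot 2^n$ order-$4$ elements by $\varphi(4)=2$ is only a slightly more detailed rendering of what the paper asserts directly.
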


\subsection{The cyclicity degree of finite $p$-groups\\ possessing a cyclic maximal subgroup}

Let $p$ be a prime, $n \ge 3$ be an integer and denote by $\calg$
the class consis\-ting of all finite $p$-groups of order $p^n$
having a maximal subgroup which is cyclic. Obviously, $\calg$
contains the finite abelian $p$-groups of type $\mathbb{Z}_p
\times \mathbb{Z}_{p^{n-1}}$ whose cyclicity degrees have been
computed in Section \ref{subsect_abelian}, but some finite nonabelian $p$-groups
belong to $\calg$, too. They are exhaustively described in \cite[Vol.\ II, Th.\ 4.1]{Suz1982}: a nonabelian group is contained in $\calg$ if
and only if it is isomorphic to \bit\item[--] $M(p^n)=\langle
x,y\mid x^{p^{n-1}}=y^p=1,\ y^{-1}x y=x^{p^{n-2}+1}\rangle$\eit
when $p$ is odd, or to one of the following groups \bit\item[--]
$M(2^n)\ (n \ge 4),$
\item[--] the dihedral group $D_{2^n}$,
\item[--] the generalized quaternion group
$$Q_{2^n}=\langle x,y\mid x^{2^{n-1}}=y^4=1,\ yxy^{-1}=x^{2^{n-1}-1}\rangle,$$
\item[--] the quasi-dihedral group
$$S_{2^n}=\langle x,y\mid x^{2^{n-1}}=y^2=1,\ y^{-1}xy=x^{2^{n-2}-1}\rangle\ (n \ge 4)$$\eit
when $p=2$. \bk

In the following the cyclicity degrees of the above $p$-groups
will be determined. We recall first the explicit formulas for the
total number of subgroups of these groups (see \cite[Lemma 3.2.1]{Tar2011}).

\begin{lemma} The following equalities hold: \bit
\item[\rm a)] $|L(M(p^n))|=(1+p)n+1-p \hspace{1mm},$
\item[\rm b)] $|L(D_{2^n})|=2^n+n-1 \hspace{1mm},$
\item[\rm c)] $|L(Q_{2^n})|=2^{n-1}+n-1 \hspace{1mm},$
\item[\rm d)] $|L(S_{2^n})|=3 \cdot 2^{n-2}+n-1 \hspace{1mm}.$
\eit
\end{lemma}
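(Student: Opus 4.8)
The plan is to compute the total number of subgroups for each of the four families of $p$-groups with a cyclic maximal subgroup, treating them uniformly where possible. For each group $G$ in the list, every proper subgroup either lies inside the cyclic maximal subgroup $\langle x\rangle \cong \mathbb{Z}_{p^{n-1}}$ or does not. The subgroups contained in $\langle x\rangle$ are exactly its $n$ subgroups, one of each order $p^0,p^1,\ldots,p^{n-1}$, contributing $n$ to the count. So the main task reduces to counting the subgroups \emph{not} contained in the cyclic maximal subgroup, together with $G$ itself.

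First I would handle the modular group $M(p^n)$ of part (a). Here the structure theory of modular $p$-groups shows that its subgroup lattice is isomorphic to that of the abelian group $\mathbb{Z}_p\times\mathbb{Z}_{p^{n-1}}$ (this is the classical fact that modular groups and the corresponding abelian groups are lattice-isomorphic, available via \cite{Sch1994} as already invoked in the paper). I would therefore just read off $|L(\mathbb{Z}_p\times\mathbb{Z}_{p^{n-1}})|$ from the formula in Corollary \ref{cor_cdeg_2} with $\a_1=1$, $\a_2=n-1$, which gives $(1+p)n+1-p$ after simplification. This is the cleanest route and avoids a direct subgroup enumeration.

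Next I would treat the three $2$-groups $D_{2^n}$, $Q_{2^n}$, $S_{2^n}$ by direct counting, exploiting their well-understood subgroup structures. For the dihedral group $D_{2^n}$, the subgroups are: the $n$ cyclic subgroups inside the rotation subgroup $\langle x\rangle$, plus the dihedral (including reflection) subgroups. Counting reflections and the dihedral subgroups of each order, one finds that the reflection-containing subgroups contribute $2^n-1$ beyond the cyclic chain, giving $|L(D_{2^n})|=2^n+n-1$. For $Q_{2^n}$, the key feature is that it has a \emph{unique} involution, so the count of non-cyclic-maximal subgroups is smaller, yielding $2^{n-1}+n-1$; I would count the cyclic subgroups of each order together with the quaternion subgroups. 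For the quasi-dihedral (semidihedral) group $S_{2^n}$, which mixes features of both, a careful count of its cyclic, dihedral and quaternion subgroups of each order gives $3\cdot 2^{n-2}+n-1$.

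The main obstacle will be the bookkeeping for the three $2$-groups: one must correctly enumerate, at each order $2^k$, how many subgroups of each isomorphism type occur and verify that no subgroup is double-counted, which requires knowing the precise generator relations and the conjugacy/normality structure. The cleanest way to organize this is to split each count as (cyclic subgroups of the maximal subgroup) $+$ (subgroups meeting the maximal subgroup in index $2$) and to use the defining relations $y^{-1}xy = x^{\pm 1}$ or $x^{2^{n-2}-1}$ to determine orders of elements outside $\langle x\rangle$. I expect the arithmetic to collapse neatly into the stated closed forms, with the semidihedral case being the most delicate because its element orders outside $\langle x\rangle$ are neither all $2$ (as in the dihedral case) nor all $4$ (as in the quaternion case).
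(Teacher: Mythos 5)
You should first be aware that the paper contains no proof of this lemma at all: it is explicitly recalled from \cite[Lemma 3.2.1]{Tar2011} (``We recall first the explicit formulas\dots''), so there is no in-text argument to match, and any correct derivation is a genuine addition. Your sketch is correct, and the two routes you combine are worth comparing with the direct-count derivation that the cited source (and the paper's own style, which uses maximal subgroups plus inclusion--exclusion for the $|C(\cdot)|$ computations) would give. For the three $2$-groups your decomposition can be made fully rigorous: since $\langle x\rangle$ is normal of index $2$, any $H\not\le\langle x\rangle$ satisfies $H\langle x\rangle=G$, hence $[H:H\cap\langle x\rangle]=2$ and $H=\langle x^{2^k},x^jy\rangle$ with $j$ taken modulo $2^k$; for $D_{2^n}$ all levels $0\le k\le n-1$ occur, giving $2^n-1$ such subgroups; for $Q_{2^n}$ the level $k=n-1$ is empty (the unique involution lies in $\langle x\rangle$, every $x^jy$ having order $4$), giving $2^{n-1}-1$; for $S_{2^n}$ the parity phenomenon you anticipated is exactly right: $(x^jy)^2=x^{j\cdot 2^{n-2}}$, so $x^jy$ is an involution for $j$ even and has order $4$ for $j$ odd, whence levels $k\le n-3$ contribute $2^k$ each while levels $k=n-2$ and $k=n-1$ contribute $2^{n-2}$ each, totalling $3\cdot 2^{n-2}-1$; adding the $n$ subgroups of $\langle x\rangle$ yields b)--d). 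For a), your lattice-theoretic shortcut is legitimate and elegant: $M(p^n)$ is a modular $p$-group, and by Iwasawa's theorem (see \cite{Sch1994}) $L(M(p^n))\cong L(\Z_p\times\Z_{p^{n-1}})$; the rank-two formula in the proof of Corollary \ref{cor_cdeg_2} with $\a_1=1$, $\a_2=n-1$ indeed simplifies, via $\frac{1}{(p-1)^2}\left((n-1)p^3-(n-3)p^2-(n+3)p+(n+1)\right)=(n-1)p+n+1=(1+p)n+1-p$, to the stated value, and this is consistent with the paper's remark that $L$-isomorphisms preserve cyclic subgroups (note $|C(M(p^n))|=(n-1)p+2=|C(\Z_p\times\Z_{p^{n-1}})|$). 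One point you should make explicit: the lemma is also applied to $M(2^n)$, $n\ge 4$, so you must note that the lattice isomorphism holds in that case too ($M(2^n)$ is modular and not Hamiltonian, with three involutions, so the Iwasawa correspondence applies), whereas a blanket appeal to ``modular groups are lattice-isomorphic to abelian ones'' fails for $Q_8$. Finally, if you want to sidestep the semidihedral bookkeeping entirely, inclusion--exclusion over the three maximal subgroups $\Z_{2^{n-1}}$, $D_{2^{n-1}}$, $Q_{2^{n-1}}$, whose pairwise and triple intersections all equal $\langle x^2\rangle\cong\Z_{2^{n-2}}$, gives $|L(S_{2^n})|=n+(2^{n-1}+n-2)+(2^{n-2}+n-2)-2(n-1)+1=3\cdot 2^{n-2}+n-1$ directly from b) and c) -- this recursive IEP is the closest analogue of what the paper does for the cyclic-subgroup counts.
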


Computing the cyclic subgroups of groups in $\calg$ is facile by
using their maximal subgroup structure and the well-known
Inclusion-Exclusion Principle (IEP, in short).
\bigskip

$M(p^n)$ has $p+1$ maximal subgroups: $M_0=\langle x\rangle$,
$M_1=\langle xy\rangle$, ..., $M_{p-1}=\langle x^{p-1}y\rangle$
and $M_p=\langle x^p,y\rangle$. Moreover, $M_i\cong
\mathbb{Z}_{p^{n-1}}$, for $i=0,1, ..., p-1$, $M_p=\langle x^p,y
\rangle\cong \mathbb{Z}_{p^{n-2}}\times\mathbb{Z}_p$ and any
intersection of at least two distinct such subgroups is isomorphic
to $\mathbb{Z}_{p^{n-2}}$. We infer that
$$\hspace{-86mm}|C(M(p^n))|=|\bigcup_{i=0}^p C(M_i)|=$$
$$\hspace{20mm}=\dd\sum_{i=0}^p |C(M_i)|-\hspace{-5mm}\dd\sum_{0\leq i_1<i_2\leq p}\hspace{-5mm}|C(M_{i_1}\cap M_{i_2})|+\ldots+(-1)^p\,|\bigcap_{i=0}^p C(M_i)|=$$
$$\hspace{22mm}=p\,|C(\mathbb{Z}_{p^{n-1}})|{+}|C(\mathbb{Z}_{p^{n-2}}\times\mathbb{Z}_p)|{-}|C(\mathbb{Z}_{p^{n-2}})|\dd\sum_{i=2}^{p+1}(-1)^i\alege{p+1}{i}\hspace{-1mm}=$$
$$\hspace{-8mm}=np+(n-2)p+2-(n-1)p=(n-1)p+2,$$which leads to the
following theorem.

\begin{theorem} The cyclicity degree of $M(p^n)$ is
$$\cdeg(M(p^n))=\frax{(n-1)p+2}{(n-1)p+n+1}\hspace{1mm}.$$
\end{theorem}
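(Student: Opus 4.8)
The plan is to read off $\cdeg(M(p^n))$ directly from its definition $\cdeg(G)=|C(G)|/|L(G)|$, supplying the two counts separately. The denominator is immediate from part a) of the Lemma above, which gives $|L(M(p^n))|=(1+p)n+1-p$. The numerator $|C(M(p^n))|$ is the substantive quantity, and I would obtain it exactly by the inclusion--exclusion argument carried out just above the statement, so that the theorem itself reduces to a single division followed by an algebraic tidy-up.

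To justify the numerator I would first record the maximal-subgroup structure of $M(p^n)$: there are $p+1$ maximal subgroups, of which $M_0,\ldots,M_{p-1}$ are cyclic of order $p^{n-1}$ and $M_p\cong\mathbb{Z}_{p^{n-2}}\times\mathbb{Z}_p$, while every intersection of two or more of them is isomorphic to $\mathbb{Z}_{p^{n-2}}$. Since $M(p^n)$ is non-cyclic, each of its cyclic subgroups is proper and hence lies in some maximal subgroup, so $C(M(p^n))=\bigcup_{i=0}^p C(M_i)$; moreover $C(M_i)\cap C(M_j)=C(M_i\cap M_j)$, so inclusion--exclusion applies verbatim with the sets $C(M_i)$. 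I would then plug in the cyclic-subgroup counts $|C(\mathbb{Z}_{p^{n-1}})|=n$, $|C(\mathbb{Z}_{p^{n-2}}\times\mathbb{Z}_p)|=(n-2)p+2$ (from Corollary \ref{cor_cdeg_2} with $\alpha_1=1$, $\alpha_2=n-2$), and $|C(\mathbb{Z}_{p^{n-2}})|=n-1$, leaving only the combinatorial coefficient $\sum_{i=2}^{p+1}(-1)^i\binom{p+1}{i}$ that multiplies the common intersection count.

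That coefficient is the one genuine computation: by the binomial theorem $\sum_{i=0}^{p+1}(-1)^i\binom{p+1}{i}=0$, so isolating the $i=0$ and $i=1$ terms gives $\sum_{i=2}^{p+1}(-1)^i\binom{p+1}{i}=-1+(p+1)=p$. Assembling the pieces yields $|C(M(p^n))|=pn+((n-2)p+2)-(n-1)p=(n-1)p+2$. Finally I would rewrite the denominator as $(1+p)n+1-p=(n-1)p+n+1$ and conclude $\cdeg(M(p^n))=\frac{(n-1)p+2}{(n-1)p+n+1}$. I do not expect a serious obstacle: the only points needing care are the bookkeeping of signs in the inclusion--exclusion, equivalently the identity $\sum_{i=2}^{p+1}(-1)^i\binom{p+1}{i}=p$, and the verification that a single common isomorphism type $\mathbb{Z}_{p^{n-2}}$ really governs all higher intersections, which is exactly what collapses the alternating sum into one term.
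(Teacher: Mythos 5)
Your proposal is correct and follows essentially the same route as the paper: the same inclusion--exclusion over the $p+1$ maximal subgroups $M_0,\ldots,M_{p-1}\cong\mathbb{Z}_{p^{n-1}}$ and $M_p\cong\mathbb{Z}_{p^{n-2}}\times\mathbb{Z}_p$, with all higher intersections isomorphic to $\mathbb{Z}_{p^{n-2}}$, collapsing the alternating sum via $\sum_{i=2}^{p+1}(-1)^i\binom{p+1}{i}=p$ to get $|C(M(p^n))|=(n-1)p+2$, then dividing by the lemma's count $|L(M(p^n))|=(1+p)n+1-p$. Your only additions --- explicitly noting that every cyclic subgroup is proper (so lies in some $M_i$) and that $C(M_i)\cap C(M_j)=C(M_i\cap M_j)$ --- are points the paper leaves implicit, and they are correct.
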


\begin{cor} We have: \bit
\item[\rm a)] $\dd\lim_{n\to\infty} \cdeg(M(p^n))=\frax{p}{p+1} \hspace{1mm},$ for every fixed prime $p$.
\item[\rm b)] $\dd\lim_{p\to\infty} \cdeg(M(p^n))=1.$
\eit
\end{cor}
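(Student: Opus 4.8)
The plan is to read off both limits directly from the closed-form expression
$$\cdeg(M(p^n))=\frax{(n-1)p+2}{(n-1)p+n+1}$$
established in the preceding theorem, treating it as a rational function in the two integer parameters, namely the exponent $n\ge 3$ and the prime $p$.

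For part (a), I would fix the prime $p$ and regard the right-hand side as a rational function of $n$. Rewriting the numerator as $pn-p+2$ and the denominator as $(p+1)n-p+1$ exhibits both as linear polynomials in $n$, with leading coefficients $p$ and $p+1$ respectively. Dividing numerator and denominator by $n$ and letting $n\to\infty$, the constant terms $-p+2$ and $-p+1$ contribute $O(1/n)\to 0$, so the limit equals the ratio of the leading coefficients, that is $p/(p+1)$.

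For part (b), I would instead fix $n$ and view the same expression as a rational function of $p$. Here both the numerator $(n-1)p+2$ and the denominator $(n-1)p+n+1$ are linear in $p$ with the \emph{same} leading coefficient $n-1$. Dividing through by $p$ and letting $p\to\infty$, the lower-order terms $2$ and $n+1$ again vanish, the common leading coefficient cancels, and the limit is $1$.

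Since the statement is a routine evaluation of the limits of an explicit rational function, there is no substantive obstacle. The only point requiring a word of care is that in part (b) the standing hypothesis $n\ge 3$ guarantees $n-1\ne 0$, so that the shared leading term genuinely cancels rather than producing an indeterminate form; the same positivity ensures that in part (a) the denominator leading coefficient $p+1$ is nonzero, so both limits are well defined.
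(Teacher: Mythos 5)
Your proposal is correct and matches the paper's approach: the paper derives the corollary as an immediate consequence of the explicit formula $\cdeg(M(p^n))=\frac{(n-1)p+2}{(n-1)p+n+1}$, exactly as you do by comparing leading coefficients in $n$ and in $p$ respectively. Your added remark that $n\ge 3$ ensures $n-1\neq 0$ in part (b) is a sensible (if minor) point of care that the paper leaves implicit.
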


The group $D_{2^n}$ has 3 maximal subgroups: $M_0=\langle
x\rangle\cong\mathbb{Z}_{2^{n-1}}$, $M_1=\langle x^2,y\rangle\cong
D_{2^{n-1}}$ and $M_2=\langle x^2,xy\rangle\cong D_{2^{n-1}}$. By
applying IEP, one obtains the recurrence relation
$$|C(D_{2^n})|=2|C(D_{2^{n-1}})|+2-n,$$which implies
that $$|C(D_{2^n})|=2^{n-1}+n.$$

So, we have:

\begin{theorem} The cyclicity degree of $D_{2^n}$
is $$\cdeg(D_{2^n})=\frax{2^{n-1}+n}{2^n+n-1}\hspace{1mm}.$$
\end{theorem}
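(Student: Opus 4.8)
The plan is to use the definition $\cdeg(D_{2^n}) = |C(D_{2^n})|/|L(D_{2^n})|$. The denominator $|L(D_{2^n})| = 2^n + n - 1$ is already supplied by the Lemma, so the whole task reduces to computing the numerator $|C(D_{2^n})|$, the number of cyclic subgroups. I would establish $|C(D_{2^n})| = 2^{n-1} + n$ and then simply divide.

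First I would count cyclic subgroups directly from the element structure of $D_{2^n} = \langle x, y \mid x^{2^{n-1}} = y^2 = 1,\ yxy^{-1} = x^{-1}\rangle$. Every element is either a rotation $x^k$ or a reflection $x^k y$. A cyclic subgroup generated by a rotation lies inside the rotation subgroup $\langle x\rangle \cong \mathbb{Z}_{2^{n-1}}$, which, being cyclic of order $2^{n-1}$, has exactly $n$ subgroups (one for each divisor $2^0,\ldots,2^{n-1}$). A cyclic subgroup generated by a reflection has order $2$, and since each of the $2^{n-1}$ reflections is the unique nontrivial element of the subgroup it generates, these contribute exactly $2^{n-1}$ distinct subgroups. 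The only order-$2$ subgroup inside $\langle x\rangle$ is $\langle x^{2^{n-2}}\rangle$, which is generated by a rotation and hence differs from every reflection subgroup, so the two families are disjoint. Summing gives $|C(D_{2^n})| = n + 2^{n-1}$.

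As a cross-check I would rederive the same value via the Inclusion-Exclusion Principle on the three maximal subgroups $M_0 = \langle x\rangle \cong \mathbb{Z}_{2^{n-1}}$, $M_1 = \langle x^2, y\rangle \cong D_{2^{n-1}}$ and $M_2 = \langle x^2, xy\rangle \cong D_{2^{n-1}}$. Since $D_{2^n}$ is non-cyclic, every cyclic subgroup is proper and therefore sits inside some maximal subgroup, so $C(D_{2^n}) = C(M_0)\cup C(M_1)\cup C(M_2)$. The key observation is that all three pairwise intersections and the triple intersection coincide with $\langle x^2\rangle \cong \mathbb{Z}_{2^{n-2}}$, because the reflections lying in $M_1$ and those lying in $M_2$ form disjoint residue classes modulo $2$. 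Using $|C(\mathbb{Z}_{2^m})| = m+1$, IEP then produces the recurrence $|C(D_{2^n})| = 2|C(D_{2^{n-1}})| + 2 - n$, which, solved with base case $|C(D_8)| = 7$, again yields $|C(D_{2^n})| = 2^{n-1} + n$.

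The argument is essentially routine; the only point demanding care is the intersection/no-overlap bookkeeping, namely checking in the direct count that reflection subgroups never coincide with subgroups of $\langle x\rangle$, or equivalently in the IEP count that the three maximal subgroups meet precisely in $\langle x^2\rangle$. Once $|C(D_{2^n})| = 2^{n-1}+n$ is in hand, substituting it together with $|L(D_{2^n})| = 2^n + n - 1$ into the definition gives $\cdeg(D_{2^n}) = \frac{2^{n-1}+n}{2^n+n-1}$, which completes the proof.
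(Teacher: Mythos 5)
Your proposal is correct, and its primary argument is genuinely different from (and more elementary than) the paper's. The paper proves this theorem purely via the Inclusion-Exclusion Principle on the three maximal subgroups $M_0\cong\mathbb{Z}_{2^{n-1}}$, $M_1, M_2\cong D_{2^{n-1}}$, obtaining the recurrence $|C(D_{2^n})|=2|C(D_{2^{n-1}})|+2-n$ and solving it to get $|C(D_{2^n})|=2^{n-1}+n$; your cross-check is precisely this argument, down to the same recurrence, and your verification that all pairwise and triple intersections equal $\langle x^2\rangle$ (via the parity of reflections in $M_1$ versus $M_2$) makes explicit a point the paper leaves implicit. Your main route, by contrast, counts cyclic subgroups directly from generators: the $n$ subgroups of the cyclic rotation subgroup $\langle x\rangle$ plus the $2^{n-1}$ order-two subgroups generated by the distinct reflections, with the disjointness of the two families being the only point needing care. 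This direct count buys you a closed form immediately, with no recursion and no base case ($|C(D_8)|=7$ is needed only for the IEP version), and it works uniformly for all $n\ge 2$; what the paper's IEP route buys is methodological uniformity, since the same maximal-subgroup technique handles $M(p^n)$, $Q_{2^n}$ and $S_{2^n}$ in the surrounding subsection, where no equally transparent element count of this kind is available for all cases. Both computations check out ($2(2^{n-2}+n-1)+2-n=2^{n-1}+n$), and dividing by the Lemma's value $|L(D_{2^n})|=2^n+n-1$ completes the proof exactly as in the paper.
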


\begin{cor}
$\dd\lim_{n\to\infty} \cdeg(D_{2^n})=\frax{1}{2}\hspace{1mm}.$
\end{cor}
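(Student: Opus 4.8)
The plan is to read the limit directly off the closed-form expression
$\cdeg(D_{2^n})=\frax{2^{n-1}+n}{2^n+n-1}$
furnished by the preceding theorem, so that the corollary collapses to an elementary comparison of growth rates. First I would factor the dominant exponential term out of both numerator and denominator, dividing top and bottom by $2^n$ to obtain
$$\cdeg(D_{2^n})=\frac{2^{n-1}+n}{2^n+n-1}=\frac{\tfrac12+n\,2^{-n}}{1+(n-1)2^{-n}}.$$
The point of this rearrangement is to isolate the constant $\tfrac12$ as the surviving contribution and to relegate everything else into terms that will be shown to vanish.

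The only analytic input is the standard fact that an exponential dominates any polynomial, so that $n\,2^{-n}\to 0$ and $(n-1)2^{-n}\to 0$ as $n\to\infty$. Since the denominator $2^n+n-1$ is strictly positive for every $n\ge 1$, no division-by-zero issue arises and I may pass to the limit termwise in the rewritten quotient, using continuity of the arithmetic operations, to get
$$\lim_{n\to\infty}\cdeg(D_{2^n})=\frac{\tfrac12+0}{1+0}=\frax12,$$
which is exactly the asserted value.

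The main obstacle is essentially nonexistent: all the genuine combinatorial work has already been absorbed into the explicit formula of the preceding theorem, and what remains is the routine exponential-beats-linear estimate $n\,2^{-n}\to 0$. The same mechanism explains the earlier limits in this section, where the constant term emerges as the ratio of the two leading exponential coefficients; here those coefficients are $2^{n-1}$ and $2^{n}$, whence the value $\tfrac12$.
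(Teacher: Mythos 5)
Your proof is correct and matches the paper's (implicit) argument: the paper states the corollary as an immediate consequence of the formula $\cdeg(D_{2^n})=\frac{2^{n-1}+n}{2^n+n-1}$, and your computation dividing through by $2^n$ and using $n\,2^{-n}\to 0$ is exactly the routine limit evaluation intended.
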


$Q_{2^n}$ has 3 maximal subgroups: $M_0\cong\mathbb{Z}_{2^{n-1}}$
and $M_1, M_2\cong Q_{2^{n-1}}$. By applying again IEP, we find
the following recurrence relation
$$|C(Q_{2^n})|=2|C(Q_{2^{n-1}})|+2-n,$$proving
that $$|C(Q_{2^n})|=2^{n-2}+n.$$

In this way, one obtains:

\begin{theorem} The cyclicity degree of $Q_{2^n}$
is $$\cdeg(Q_{2^n})=\frax{2^{n-2}+n}{2^{n-1}+n-1}\hspace{1mm}.$$
\end{theorem}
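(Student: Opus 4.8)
The plan is to mirror the treatment of $D_{2^n}$: express the set of cyclic subgroups of $Q_{2^n}$ as a union over its three maximal subgroups, apply the Inclusion-Exclusion Principle, derive the recurrence $|C(Q_{2^n})|=2|C(Q_{2^{n-1}})|+2-n$, and then solve it in closed form. Since $Q_{2^n}$ is non-cyclic for $n\ge 3$, every cyclic subgroup is proper and therefore lies in one of the maximal subgroups $M_0\cong\mathbb{Z}_{2^{n-1}}$ and $M_1,M_2\cong Q_{2^{n-1}}$, so that $C(Q_{2^n})=\bigcup_{i=0}^2 C(M_i)$. The observation that makes IEP applicable to these sets of cyclic subgroups is that for any subgroups $A,B$ one has $C(A)\cap C(B)=C(A\cap B)$: a cyclic subgroup lies in both $A$ and $B$ precisely when it lies in $A\cap B$.

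First I would determine the intersections of the maximal subgroups. Each $M_i$ has index $2$, so any two distinct ones meet in a subgroup of order $2^{n-2}$. Using the presentation $x^{2^{n-1}}=y^4=1$, $yxy^{-1}=x^{2^{n-1}-1}=x^{-1}$, I would check that $\langle x^2\rangle\cong\mathbb{Z}_{2^{n-2}}$ is contained in each of $M_0,M_1,M_2$; the order count then forces every pairwise intersection, as well as the triple intersection, to equal $\langle x^2\rangle$. Substituting $|C(M_0)|=|C(\mathbb{Z}_{2^{n-1}})|=n$, $|C(M_1)|=|C(M_2)|=|C(Q_{2^{n-1}})|$, and $|C(\langle x^2\rangle)|=|C(\mathbb{Z}_{2^{n-2}})|=n-1$ into
\[
|C(Q_{2^n})| = \sum_{i=0}^2 |C(M_i)| - \sum_{0\le i<j\le 2} |C(M_i\cap M_j)| + |C(M_0\cap M_1\cap M_2)|
\]
yields $|C(Q_{2^n})| = n + 2|C(Q_{2^{n-1}})| - 3(n-1) + (n-1) = 2|C(Q_{2^{n-1}})| + 2 - n$, the asserted recurrence, valid for $n\ge 4$.

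Next I would solve it. Taking $Q_8$ as the base case, a direct listing of its cyclic subgroups (the trivial subgroup, the central subgroup of order $2$, and the three cyclic subgroups of order $4$) gives $|C(Q_8)|=5=2^{1}+3$. I would then verify that the closed form $|C(Q_{2^n})|=2^{n-2}+n$ satisfies both the base case and the recurrence, since $2\bigl(2^{n-3}+(n-1)\bigr)+2-n = 2^{n-2}+n$, and a short induction completes the count. Dividing by $|L(Q_{2^n})|=2^{n-1}+n-1$ from the Lemma gives $\cdeg(Q_{2^n})=\frax{2^{n-2}+n}{2^{n-1}+n-1}$.

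The main obstacle will be justifying that the three pairwise intersections and the triple intersection all coincide with $\langle x^2\rangle$; once the maximal subgroup structure of $Q_{2^n}$ is pinned down, this reduces to an order argument, after which the remainder is bookkeeping. A secondary point requiring care is the identity $C(A)\cap C(B)=C(A\cap B)$, which is exactly what licenses applying IEP to the sets of cyclic subgroups rather than to the subgroups themselves.
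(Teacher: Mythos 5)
Your proposal is correct and follows essentially the same route as the paper: decompose $C(Q_{2^n})$ over the three maximal subgroups $M_0\cong\mathbb{Z}_{2^{n-1}}$, $M_1,M_2\cong Q_{2^{n-1}}$, apply inclusion-exclusion to obtain $|C(Q_{2^n})|=2|C(Q_{2^{n-1}})|+2-n$, and solve to get $2^{n-2}+n$. You merely supply details the paper leaves implicit (the identity $C(A)\cap C(B)=C(A\cap B)$, the identification of all intersections with $\langle x^2\rangle$, the base case $|C(Q_8)|=5$, and the restriction of the recurrence to $n\ge 4$), all of which check out.
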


\begin{cor}
$\dd\lim_{n\to\infty} \cdeg(Q_{2^n})=\frax{1}{2}\hspace{1mm}.$
\end{cor}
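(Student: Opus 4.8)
The plan is to read off the limit directly from the closed-form expression
$$\cdeg(Q_{2^n})=\frax{2^{n-2}+n}{2^{n-1}+n-1}$$
supplied by the immediately preceding theorem; once that formula is in hand, the corollary is a one-line computation in elementary analysis. First I would normalize the quotient by dividing both numerator and denominator by the dominant factor $2^{n-1}$, which recasts the expression as
$$\cdeg(Q_{2^n})=\frax{\frac12+n/2^{n-1}}{1+(n-1)/2^{n-1}}\hspace{1mm}.$$
This isolates the constant $\frac12$ as the surviving part of the numerator and $1$ as the surviving part of the denominator, relegating everything else to correction terms of the form (polynomial)$/$(exponential).

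The only substantive point to verify is the standard fact that an exponential outgrows a linear function, i.e.\ $n/2^{n-1}\to 0$ and $(n-1)/2^{n-1}\to 0$ as $n\to\infty$. I would record this, if a self-contained justification is wanted, via the crude bound $2^{n-1}\ge\binom{n-1}{2}$, which already grows quadratically in $n$ and hence dominates $n$; alternatively the ratio test applied to $n/2^{n-1}$ settles it at once. With these two correction terms tending to $0$, and the denominator tending to the nonzero limit $1$, continuity of the quotient map away from a vanishing denominator gives
$$\lim_{n\to\infty}\cdeg(Q_{2^n})=\frax{1/2+0}{1+0}=\frax12\hspace{1mm}.$$

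There is no real obstacle in this argument: the entire combinatorial and group-theoretic content has already been absorbed into the formula for $\cdeg(Q_{2^n})$, in particular into the derivation $|C(Q_{2^n})|=2^{n-2}+n$ from the recurrence $|C(Q_{2^n})|=2|C(Q_{2^{n-1}})|+2-n$ together with the subgroup count $|L(Q_{2^n})|=2^{n-1}+n-1$. What remains is purely a limit of a rational-exponential expression, and the limiting value $\frac12$ is exactly the ratio $2^{n-2}/2^{n-1}$ of leading coefficients, as one should expect on inspection.
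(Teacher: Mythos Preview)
Your proof is correct and is precisely the intended argument: the paper states the corollary without proof, as it follows immediately from the closed-form expression $\cdeg(Q_{2^n})=\dfrac{2^{n-2}+n}{2^{n-1}+n-1}$ in the preceding theorem by the elementary limit computation you describe.
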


$S_{2^n}$ has 3 maximal subgroups: $M_0\cong\mathbb{Z}_{2^{n-1}}$,
$M_1\cong D_{2^{n-1}}$ and $M_2\cong Q_{2^{n-1}}$. In this case
IEP leads directly to an explicit formula for the number of cyclic
subgroups of $S_{2^n}$, namely
$$|C(S_{2^n})|=|C(\mathbb{Z}_{2^{n-1}})|+|C(D_{2^{n-1}})|+|C(Q_{2^{n-1}})|-2|C(\mathbb{Z}_{2^{n-2}})|=$$
$$\hspace{-58mm}=3\cdot2^{n-3}+n.$$

So, we get the following theorem.

\begin{theorem} The cyclicity degree of $S_{2^n}$
is
$$\cdeg(S_{2^n})=\frax{3\cdot2^{n-3}+n}{3\cdot2^{n-2}+n-1}\hspace{1mm}.$$
\end{theorem}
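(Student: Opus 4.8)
The plan is to compute $|C(S_{2^n})|$ via the Inclusion--Exclusion Principle applied to the three maximal subgroups $M_0, M_1, M_2$ of $S_{2^n}$, exactly as was done for $D_{2^n}$ and $Q_{2^n}$. First I would record the maximal subgroup structure: $M_0 \cong \mathbb{Z}_{2^{n-1}}$, $M_1 \cong D_{2^{n-1}}$, and $M_2 \cong Q_{2^{n-1}}$. A key observation, which must be verified, is that every cyclic subgroup of $S_{2^n}$ is contained in at least one maximal subgroup (this is automatic for proper subgroups, and the whole group is noncyclic, so $C(S_{2^n}) = \bigcup_{i=0}^{2} C(M_i)$). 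The IEP then gives
\begin{equation*}
|C(S_{2^n})| = \sum_{i=0}^{2} |C(M_i)| - \sum_{0\le i<j\le 2} |C(M_i \cap M_j)| + |C(M_0 \cap M_1 \cap M_2)|.
\end{equation*}

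Next I would identify the pairwise and triple intersections. The natural expectation is that all three maximal subgroups share the common cyclic subgroup $\langle x^2\rangle \cong \mathbb{Z}_{2^{n-2}}$ as their mutual intersection, so that each pairwise intersection and the triple intersection all equal $\mathbb{Z}_{2^{n-2}}$. Under this expectation the three pairwise terms contribute $-3\,|C(\mathbb{Z}_{2^{n-2}})|$ and the triple term contributes $+|C(\mathbb{Z}_{2^{n-2}})|$, whose net effect is $-2\,|C(\mathbb{Z}_{2^{n-2}})|$. This matches the displayed formula
\begin{equation*}
|C(S_{2^n})| = |C(\mathbb{Z}_{2^{n-1}})| + |C(D_{2^{n-1}})| + |C(Q_{2^{n-1}})| - 2\,|C(\mathbb{Z}_{2^{n-2}})|,
\end{equation*}
so I must confirm that the intersection pattern is indeed this simple.

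Then the computation becomes purely arithmetic: substitute the already-established values $|C(\mathbb{Z}_{2^{n-1}})| = n$ (the cyclic group $\mathbb{Z}_{2^{n-1}}$ has exactly $n$ subgroups, all cyclic), $|C(D_{2^{n-1}})| = 2^{n-2} + (n-1)$ and $|C(Q_{2^{n-1}})| = 2^{n-3} + (n-1)$ from the preceding theorems, and $|C(\mathbb{Z}_{2^{n-2}})| = n-1$. Summing yields $n + (2^{n-2}+n-1) + (2^{n-3}+n-1) - 2(n-1) = 2^{n-2} + 2^{n-3} + n = 3\cdot 2^{n-3} + n$, giving the stated count. Finally, dividing by $|L(S_{2^n})| = 3\cdot 2^{n-2} + n - 1$ from the Lemma gives the asserted cyclicity degree.

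The main obstacle is the combinatorial bookkeeping in the IEP, specifically verifying that the pairwise intersections $M_i \cap M_j$ all coincide with the single cyclic subgroup $\langle x^2\rangle$ of order $2^{n-2}$, rather than producing distinct or larger intersections. One should check that $M_0 \cap M_1$, $M_0 \cap M_2$, and $M_1 \cap M_2$ each equal $\langle x^2\rangle$, using the defining relations $x^{2^{n-1}} = y^2 = 1$ and $y^{-1}xy = x^{2^{n-2}-1}$; the subtlety is that $S_{2^n}$ mixes a dihedral and a quaternion maximal subgroup, so one cannot simply transcribe the $D_{2^n}$ argument and must confirm the cyclic core is shared by all three. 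Once this is settled, no recurrence is needed: unlike the dihedral and quaternion cases, the intersection counts here are all explicit constants, so IEP delivers a closed form directly.
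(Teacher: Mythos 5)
Your proposal is correct and matches the paper's own argument: the paper likewise applies the Inclusion--Exclusion Principle to the three maximal subgroups $M_0\cong\mathbb{Z}_{2^{n-1}}$, $M_1\cong D_{2^{n-1}}$, $M_2\cong Q_{2^{n-1}}$, arriving at exactly your identity $|C(S_{2^n})|=|C(\mathbb{Z}_{2^{n-1}})|+|C(D_{2^{n-1}})|+|C(Q_{2^{n-1}})|-2|C(\mathbb{Z}_{2^{n-2}})|=3\cdot 2^{n-3}+n$, and then divides by $|L(S_{2^n})|=3\cdot 2^{n-2}+n-1$. Your extra care in verifying that all pairwise and triple intersections equal $\langle x^2\rangle\cong\mathbb{Z}_{2^{n-2}}$ is exactly the point the paper leaves implicit, and your arithmetic with the previously computed values is correct.
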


\begin{cor}
$\dd\lim_{n\to\infty} \cdeg(S_{2^n})=\frax{1}{2}\hspace{1mm}.$
\end{cor}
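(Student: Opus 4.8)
The plan is to evaluate $\cdeg(S_{2^n})$ directly from its definition $\cdeg(S_{2^n})=|C(S_{2^n})|/|L(S_{2^n})|$. The denominator is already at hand: part d) of the Lemma gives $|L(S_{2^n})|=3\cdot 2^{n-2}+n-1$. Thus the whole task reduces to computing the numerator $|C(S_{2^n})|$, i.e.\ the number of cyclic subgroups.

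For the numerator I would exploit the maximal subgroup structure, exactly as in the preceding three cases. Since $n\ge 4$ the group $S_{2^n}$ is a non-cyclic $2$-group, so every cyclic subgroup is proper and hence contained in some maximal subgroup; consequently $C(S_{2^n})=C(M_0)\cup C(M_1)\cup C(M_2)$, where $M_0\cong\mathbb{Z}_{2^{n-1}}$, $M_1\cong D_{2^{n-1}}$ and $M_2\cong Q_{2^{n-1}}$ are the three maximal subgroups. Working in the presentation $\langle x,y\mid x^{2^{n-1}}=y^2=1,\ y^{-1}xy=x^{2^{n-2}-1}\rangle$, one takes $M_0=\langle x\rangle$, $M_1=\langle x^2,y\rangle$ and $M_2=\langle x^2,xy\rangle$, and verifies the isomorphism types by computing the defining relations: the identity $y^{-1}x^2y=x^{-2}$ exhibits $M_1$ as dihedral, while $(xy)^2=x^{2^{n-2}}$ and $(xy)^{-1}x^2(xy)=x^{-2}$ exhibit $M_2$ as generalized quaternion.

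The key point that makes the Inclusion--Exclusion Principle collapse neatly is that every pairwise intersection, as well as the triple intersection, of the three maximal subgroups coincides with the single cyclic group $\langle x^2\rangle\cong\mathbb{Z}_{2^{n-2}}$: each intersection of two distinct index-$2$ subgroups of a $2$-group has index $4$, hence order $2^{n-2}$, and all of them contain $\langle x^2\rangle$, forcing equality. Therefore $C(M_i)\cap C(M_j)=C(\mathbb{Z}_{2^{n-2}})$ for all relevant indices, and IEP yields
$$|C(S_{2^n})|=\sum_{i=0}^2|C(M_i)|-2\,|C(\mathbb{Z}_{2^{n-2}})|.$$
Substituting the values already established in this subsection, namely $|C(\mathbb{Z}_{2^{n-1}})|=n$, $|C(D_{2^{n-1}})|=2^{n-2}+n-1$, $|C(Q_{2^{n-1}})|=2^{n-3}+n-1$ and $|C(\mathbb{Z}_{2^{n-2}})|=n-1$, the sum telescopes to $|C(S_{2^n})|=3\cdot 2^{n-3}+n$, and dividing by the above value of $|L(S_{2^n})|$ gives the asserted formula.

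The main obstacle I anticipate is not the final arithmetic but the structural verification in the second step: confirming the precise isomorphism types of $M_1$ and $M_2$ and, above all, that all their mutual intersections collapse to the one cyclic group $\langle x^2\rangle$. It is exactly this collapse that turns the six inclusion--exclusion correction terms into the single clean coefficient $-2$; without it the count would not simplify to the stated closed form.
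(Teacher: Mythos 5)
Your proposal is correct and follows the paper's own route exactly: the paper likewise applies the Inclusion--Exclusion Principle to the three maximal subgroups $M_0\cong\mathbb{Z}_{2^{n-1}}$, $M_1\cong D_{2^{n-1}}$, $M_2\cong Q_{2^{n-1}}$, with all mutual intersections collapsing to $\mathbb{Z}_{2^{n-2}}$, to get $|C(S_{2^n})|=3\cdot 2^{n-3}+n$ and hence $\cdeg(S_{2^n})=\frac{3\cdot 2^{n-3}+n}{3\cdot 2^{n-2}+n-1}\to\frac12$. The only difference is that you verify the isomorphism types of $M_1,M_2$ and the intersection structure explicitly, which the paper asserts without proof.
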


We end this subsection by observing that the cyclicity degree of
any finite nilpotent group whose Sylow subgroups belong to $\calg$
can explicitly be calculated, in view of Corollary \ref{cor_Sylow}.

\subsection{The cyclicity degree of ZM-groups}

Recall that a ZM-group is a finite group all of whose Sylow
subgroups are cyclic. By \cite{Hup1967} such a group is of type
$${\rm ZM}(m,n,r)=\langle a, b \mid a^m = b^n = 1, \hspace{1mm}b^{-1} a b
= a^r\rangle,$$
where the triple $(m,n,r)$ satisfies the conditions
$${\rm gcd}(m,n) = {\rm gcd}(m, r-1) = 1 \quad \text{and} \quad
r^n \equiv 1 \hspace{1mm}({\rm mod}\hspace{1mm}m).$$

It is clear that $|{\rm ZM}(m,n,r)|=mn$, ${\rm
ZM}(m,n,r)'\hspace{1mm}=\hspace{1mm}\langle a \rangle$ (therefore
we have $|{\rm ZM}(m,n,r)'|=m$) and ${\rm ZM}(m,n,r)/{\rm
ZM}(m,n,r)'$ is cyclic of order $n$. The subgroups of ${\rm
ZM}(m,n,r)$ have been completely described in \cite{Cal1987}. Set
$$L=\left\{(m_1,n_1,s)\in\mathbb{N^*}^2\times \N :
m_1\mid m,\hspace{1mm} n_1\mid n,\hspace{1mm} 0\leq s\leq m_1-1,\hspace{1mm}
m_1\mid s\frac{r^n-1}{r^{n_1}-1}\right\}.$$

Then there is a bijection between $L$ and the subgroup lattice $L({\rm
ZM}(m,n,r))$ of ${\rm ZM}(m,n,r)$, namely the function that maps a
triple $(m_1,n_1,s)\in L$ into the subgroup $H_{(m_1,n_1,s)}$
defined by
$$H_{(m_1,n_1,s)}=\bigcup_{k=1}^{\frac{n}{n_1}}\alpha(n_1,
s)^k\langle a^{m_1}\rangle=\langle a^{m_1},\alpha(n_1,
s)\rangle,$$
where $\alpha(x, y)=b^xa^y$, for all $0\leq x<n$ and
$0\leq y<m$. Notice that we have
\begin{equation} \label{eq_L_ZM}
|L({\rm ZM}(m,n,r))|=|L|= \sum_{m_1 \mid m} \sum_{n_1 \mid n}\,
\gcd \left(m_1,\frac{r^n-1}{r^{n_1}-1}\right).
\end{equation}

On the other hand, we easily infer that a subgroup $H_{(m_1,n_1,s)}\in L({\rm
ZM}(m,n,r))$ is cyclic if and only if $\frax{m}{m_1}\mid
r^{n_1}-1$. This shows that
$$C({\rm ZM}(m,n,r))=\left\{H_{(m_1,n_1,s)}\in L({\rm
ZM}(m,n,r)): (m_1,n_1,s)\in
L'\right\},$$
where
$$L'=\left\{(m_1,n_1,s)\in L : \frax{m}{m_1}\mid
r^{n_1}-1\right\}.$$

Hence
\begin{equation} \label{eq_C_ZM}
|C({\rm ZM}(m,n,r))|=|L'|= \sum_{m_1 \mid m} \sum_{\substack{n_1 \mid n\\ m/m_1 \mid
r^{n_1}-1}} \, \gcd \left(m_1,\frac{r^n-1}{r^{n_1}-1}\right)
\end{equation}
and the following result holds.

\begin{theorem} The cyclicity degree of the
ZM-group ${\rm ZM}(m,n,r)$ is
\begin{equation*}
\cdeg({\rm ZM}(m,n,r))=\frac{|L'|}{|L|},
\end{equation*}
where $|L'|$  and $|L|$ are given by the identities \eqref{eq_C_ZM} and \eqref{eq_L_ZM},
respectively.
\end{theorem}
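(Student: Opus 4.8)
The plan is to assemble the formula directly from the two combinatorial identities already recorded for $|L|$ and $|L'|$, together with the definition $\cdeg(G)=|C(G)|/|L(G)|$. Since the text furnishes an explicit bijection between the index set $L$ and the subgroup lattice $L({\rm ZM}(m,n,r))$, we have $|L({\rm ZM}(m,n,r))|=|L|$, which is evaluated by \eqref{eq_L_ZM}. The numerator $|C({\rm ZM}(m,n,r))|$ will be identified with $|L'|$ by showing that, under the same bijection, the cyclic subgroups correspond exactly to the triples in $L'$; the count \eqref{eq_C_ZM} then follows. Dividing gives the claimed ratio $|L'|/|L|$.

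The substantive step is therefore the characterization that $H_{(m_1,n_1,s)}=\langle a^{m_1},\alpha(n_1,s)\rangle$ is cyclic precisely when $\frac{m}{m_1}\mid r^{n_1}-1$. First I would record that $\langle a^{m_1}\rangle$ is a cyclic normal subgroup of order $m/m_1$ and that $H_{(m_1,n_1,s)}/\langle a^{m_1}\rangle$ is cyclic of order $n/n_1$, so that $H_{(m_1,n_1,s)}$ is metacyclic. Writing $\alpha=b^{n_1}a^s$ and using the defining relation $b^{-1}ab=a^r$ in the iterated form $b^{-n_1}ab^{n_1}=a^{r^{n_1}}$, a direct conjugation computation gives $\alpha^{-1}a^{m_1}\alpha=a^{m_1 r^{n_1}}=(a^{m_1})^{r^{n_1}}$. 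Hence $\alpha$ centralizes $a^{m_1}$ if and only if $a^{m_1(r^{n_1}-1)}=1$, i.e. $m\mid m_1(r^{n_1}-1)$, which, since $m_1\mid m$, is equivalent to $\frac{m}{m_1}\mid r^{n_1}-1$. This is exactly the condition for $H_{(m_1,n_1,s)}$ to be abelian.

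It remains to pass from abelian to cyclic. The key observation is that $\gcd(m/m_1,n/n_1)=1$, because $\gcd(m,n)=1$, and that every Sylow subgroup of $H_{(m_1,n_1,s)}$, being a subgroup of a ZM-group, is cyclic. An abelian group all of whose Sylow subgroups are cyclic is itself cyclic, so $H_{(m_1,n_1,s)}$ is abelian if and only if it is cyclic, which completes the characterization. I expect the main obstacle to be the bookkeeping with the conjugation relation and the confirmation of the orders $m/m_1$ and $n/n_1$ of the two cyclic factors; the abelian-to-cyclic reduction is then immediate from the ZM hypothesis. With the characterization in hand, the theorem is a formal consequence of \eqref{eq_L_ZM}, \eqref{eq_C_ZM}, and the definition of $\cdeg$.
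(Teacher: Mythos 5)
Your proposal is correct and follows essentially the same route as the paper: both rest on Calhoun's bijection between the triples $(m_1,n_1,s)\in L$ and the subgroups of ${\rm ZM}(m,n,r)$, the cyclicity criterion $\frac{m}{m_1}\mid r^{n_1}-1$, and then the two counts \eqref{eq_L_ZM} and \eqref{eq_C_ZM}. The only difference is that the paper merely asserts the criterion (``we easily infer''), whereas you supply the justification --- the conjugation computation $\alpha^{-1}a^{m_1}\alpha=(a^{m_1})^{r^{n_1}}$ showing $H_{(m_1,n_1,s)}$ is abelian exactly under that divisibility condition, plus the observation that an abelian subgroup of a ZM-group has cyclic Sylow subgroups and is therefore cyclic --- and this filled-in detail is sound.
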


Simple explicit formulas for $\cdeg({\rm ZM}(m,n,r))$ can be given in several particular cases. One of them is obtained by taking $n=2$, $m\equiv 1
\hspace{1mm}{\rm(mod\hspace{1mm} 2)}$ and $r=-1$, that is for the
dihedral group $D_{2m}$. Then it is easy to see that \eqref{eq_C_ZM} and \eqref{eq_L_ZM} reduce to
$$|C(D_{2m})|=m+\tau(m),$$
and
$$|L(D_{2m})|=\tau(m)+\sigma(m),$$
where $\tau(m)$ and $\sigma(m)$ denote the number and the sum of all divisors of $m$, respectively. These formulas are known, they hold true
also for $m$ even and can be proved by a direct counting of the subgroups of $D_{2m}$. We obtain the following corollary.

\begin{cor} The cyclicity degree of the
dihedral group $D_{2m}$ ($m\in \N^*$) is given by the identity
\begin{equation*}
\cdeg(D_{2m})= \frac{m+\tau(m)}{\tau(m)+\sigma(m)}.
\end{equation*}
\end{cor}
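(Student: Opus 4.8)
The plan is to observe that by definition $\cdeg(D_{2m}) = |C(D_{2m})|/|L(D_{2m})|$, so the corollary reduces entirely to establishing the two counting formulas $|L(D_{2m})| = \tau(m) + \sigma(m)$ and $|C(D_{2m})| = m + \tau(m)$ stated just above it. For odd $m$ these are exactly the reductions of \eqref{eq_L_ZM} and \eqref{eq_C_ZM} already carried out, since then $D_{2m} = {\rm ZM}(m,2,-1)$ is a genuine ZM-group. Because the corollary is asserted for \emph{all} $m \in \N^*$ --- including the even case, where $D_{2m}$ has a noncyclic Sylow $2$-subgroup and is therefore not a ZM-group --- I would prefer a single direct count of subgroups valid for every $m$, which recovers the odd case as well.

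First I would fix the presentation $D_{2m} = \langle r, s \mid r^m = s^2 = 1,\ srs^{-1} = r^{-1}\rangle$ and split the subgroups into two families according to whether or not they meet the set of reflections $\{r^i s : 0 \le i < m\}$. A subgroup disjoint from the reflections lies inside the rotation group $\langle r\rangle \cong \Z_m$; these are precisely the $\tau(m)$ cyclic subgroups $\langle r^{m/d}\rangle$, one for each $d \mid m$. A subgroup containing a reflection intersects $\langle r\rangle$ in some $\langle r^{m/d}\rangle$ and then has the form $\langle r^{m/d}, r^i s\rangle \cong D_{2d}$ of order $2d$; for a fixed $d \mid m$, two such generators $r^i s$ and $r^j s$ yield the same subgroup exactly when $i \equiv j \pmod{m/d}$, so there are $m/d$ of them.

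Summing over divisors then gives $|L(D_{2m})| = \tau(m) + \sum_{d \mid m} m/d = \tau(m) + \sigma(m)$, using $\sum_{d \mid m} m/d = \sigma(m)$. For the cyclic count I would note that the rotation family contributes all $\tau(m)$ of its members, while in the reflection family only the subgroups with $d = 1$ are cyclic --- these are the $m$ order-two groups $\langle r^i s\rangle$ --- since $D_{2d}$ is noncyclic for every $d \ge 2$ (it is nonabelian for $d \ge 3$ and isomorphic to $\Z_2 \times \Z_2$ for $d = 2$). Hence $|C(D_{2m})| = m + \tau(m)$, and dividing gives the claimed formula.

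The genuinely delicate point --- and the main obstacle --- is the enumeration of the reflection-containing subgroups: I must verify that the classification is exhaustive (every subgroup is of exactly one of the two types), that the isomorphism type and order $2d$ are correctly pinned down by the intersection with $\langle r\rangle$, and above all that the "$m/d$ per divisor" count neither overcounts nor undercounts. A careful check of the coset condition $i \equiv j \pmod{m/d}$, together with the small cases $m = 1, 2$ (where $D_{2m}$ degenerates to $\Z_2$ and $\Z_2 \times \Z_2$), is what secures the formula for all $m$, not merely the odd $m$ covered by the ZM-group theorem.
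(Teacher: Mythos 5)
Your proof is correct, and it takes a genuinely different (and in one respect stronger) route than the paper. The paper obtains the two counting formulas only for odd $m$, by specializing the ZM-group identities \eqref{eq_C_ZM} and \eqref{eq_L_ZM} to $n=2$, $r=-1$: there the cyclicity condition $m/m_1 \mid r^{n_1}-1$ becomes $m_1=m$ when $n_1=1$ (contributing $\tau(m)$ cyclic subgroups inside $\langle a\rangle$) and is vacuous when $n_1=2$, where $\gcd(m_1,(r^2-1)/(r-1))=\gcd(m_1,\,{-}2)=1$ contributes $1$ per divisor to $|C|$ --- hence $m+\tau(m)$ after the order-$2$ subgroups are tallied --- while $|L|$ collects $\sum_{m_1\mid m} m_1=\sigma(m)$ from the $n_1=1$ terms; the even case is then dispatched by the remark that ``these formulas are known \ldots and can be proved by a direct counting of the subgroups of $D_{2m}$,'' with no proof given. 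You supply exactly that direct count, and your handling of the one delicate point is sound: a subgroup $H$ containing a reflection, with $H\cap\langle r\rangle=\langle r^{m/d}\rangle$, satisfies $(r^js)(r^is)=r^{j-i}\in H\cap\langle r\rangle$ for any two of its reflections, so the reflections in $H$ form a single congruence class $i \bmod m/d$ and the ``$m/d$ per divisor'' count is exact, giving $|L(D_{2m})|=\tau(m)+\sigma(m)$ and $|C(D_{2m})|=m+\tau(m)$ uniformly in $m$ (your degenerate checks $m=1,2$ confirm this). What each approach buys: the paper's specialization is nearly computation-free given the ZM machinery already developed, but covers only odd $m$ and outsources the even case to folklore; your argument is self-contained, elementary, and actually proves the statement in the full generality ($m\in\N^*$) in which the corollary is asserted.
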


{\bf Remark.} The above formula remains true for arbitrary
primes $n$, not only for $n=2$.

\counterwithout{theorem}{subsection}
\counterwithin{theorem}{section}

\section{Some minimality/maximality results\\ on cyclicity degrees} \label{sect_min_max}

As we already have seen, computing cyclic subgroups and cyclicity
degrees of abelian groups is reduced to abelian $p$-groups. In
this section we are interested to study when for an abelian
$p$-group $\mathbb{Z}_{p^{\a_1}}\times\mathbb{Z}_{p^{\a_2}}\times \cdots\times\mathbb{Z}_{p^{\a_k}}$ ($\a_1 \leq \a_2 \leq \ldots \leq \a_k$)
of a given order $p^n$ (that is, $\sum_{i=1}^k \a_i=n$) the number
of cyclic subgroups and the cyclicity degree are minimal/maximal.
\bigskip

We suppose first that $k=2$. Then we have (cf. Section \ref{subsect_abelian})
$$|C(\mathbb{Z}_{p^{\a_1}}\times\mathbb{Z}_{p^{\a_2}})|=2+2p+\ldots+2p^{\a_1-1}+(\a_2-\a_1+1)p^{\a_1}=$$
$$\hspace{27,5mm}=2+2p+\ldots+2p^{\a_1-1}+(n-2\a_1+1)p^{\a_1},$$
in view of the equality $\a_1+\a_2=n$. By studying the above expression as a
function in $\a_1$, we easily infer that it is strictly
increasing. Therefore
$|C(\mathbb{Z}_{p^{\a_1}}\times\mathbb{Z}_{p^{\a_2}})|$ is minimal
for $\a_1=1$ and maximal for $\a_1=[n/2]$ (in other words, $\a_1$
and $\a_2$ tend to be equal).
\bigskip

In order to study the cyclicity degree of
$\mathbb{Z}_{p^{\a_1}}\times\mathbb{Z}_{p^{\a_2}}$, we remark that
the formula in Theorem \ref{theor_cdeg_2} can be rewritten as

$$\hspace{-90mm} \cdeg(\mathbb{Z}_{p^{\a_1}}\times\mathbb{Z}_{p^{\a_2}})\hspace{-1mm}=$$
\smallskip
$$=\hspace{-1mm}1{-}\frax{1}{p}\hspace{-0,5mm}\left[1{-}\frax{(\a_1{+}\a_2{+}1)p^2{-}2(\a_1{+}\a_2{+}2)p{+}(\a_1{+}\a_2{+}1)}{(\a_2{-} \a_1{+}1)p^{\a_1{+}2}{-}(\a_2{-}\a_1{-}1)p^{\a_1{+}1}{-}(\a_1{+}\a_2{+}3)p{+}(\a_1{+}\a_2{+}1)}\right]\hspace{-1,5mm}=$$
\smallskip
$$\hspace{-17mm}=1{-}\frax{1}{p}\left[1{-}\frax{(n{+}1)p^2{-}2(n{+}2)p{+}(n{+}1)}{(n{-}2\a_1{+}1)p^{\a_1{+}2}{-}(n{-}2\a_1{-}1) p^{\a_1{+}1}{-}(n{+}3)p{+}(n{+}1)}\right].$$
\smallskip

\n The last expression is in this case a strictly decreasing
function in $\a_1$, which shows that
$\cdeg(\mathbb{Z}_{p^{\a_1}}\times\mathbb{Z}_{p^{\a_2}})$ is minimal
for $\a_1=[n/2]$ and maximal for $\a_1=1$. Hence we have proved the
following theorem.

\begin{theorem} In the class of abelian $p$-groups
$G$ of rank {\rm 2} and order $p^n$, we have that: \bit
\item[\rm a)] $|C(G)|$ is minimal {\rm(}maximal{\rm)} if and only if $G\cong\mathbb{Z}_p\times\mathbb{Z}_{p^{n-1}}$ {\rm(}respectively $G\cong\mathbb{Z}_{p^{[n/2]}}\times\mathbb{Z}_{p^{n-[n/2]}}${\rm)};
\item[\rm b)] $\cdeg(G)$ is minimal {\rm(}maximal{\rm)} if and only if $G\cong\mathbb{Z}_{p^{[n/2]}}\times\mathbb{Z}_{p^{n-[n/2]}}$ {\rm(}respectively $G\cong\mathbb{Z}_p\times\mathbb{Z}_{p^{n-1}}${\rm)}.
\eit
\end{theorem}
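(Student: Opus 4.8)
The plan is to reduce everything to a pair of monotonicity-in-$\alpha_1$ claims, where throughout we hold the order $p^n$ fixed, so $\alpha_1+\alpha_2=n$ and $\alpha_1$ ranges over the integers $1\le \alpha_1\le [n/2]$. For part (a) I would start from the closed form recorded in Section \ref{subsect_abelian},
$$|C(\mathbb{Z}_{p^{\alpha_1}}\times\mathbb{Z}_{p^{\alpha_2}})|=2+2p+\cdots+2p^{\alpha_1-1}+(n-2\alpha_1+1)p^{\alpha_1},$$
and treat the right-hand side as a function $N(\alpha_1)$ of the single integer variable $\alpha_1$. The natural device is to compute the forward difference $N(\alpha_1+1)-N(\alpha_1)$ and show it is strictly positive for all admissible $\alpha_1$. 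Concretely, incrementing $\alpha_1$ by one replaces the old last term $(n-2\alpha_1+1)p^{\alpha_1}$ by $2p^{\alpha_1}+(n-2\alpha_1-1)p^{\alpha_1+1}$; collecting terms, the difference equals $p^{\alpha_1}\bigl((n-2\alpha_1-1)p-(n-2\alpha_1-1)\bigr)=(n-2\alpha_1-1)(p-1)p^{\alpha_1}$, which is nonnegative since $\alpha_1\le[n/2]$ forces $n-2\alpha_1\ge 0$, hence $n-2\alpha_1-1\ge -1$; one must check the boundary case $n-2\alpha_1-1=0$ (i.e.\ $n$ odd, $\alpha_1=(n-1)/2$) separately to see the strict increase still holds up to the symmetric point. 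Strict monotonicity then pins the minimum at $\alpha_1=1$ and the maximum at $\alpha_1=[n/2]$, giving the two isomorphism types claimed.

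For part (b) the strategy is identical in spirit but applied to the rewritten expression for the cyclicity degree displayed just above the theorem,
$$\cdeg=1-\frac1p\left[1-\frac{(n+1)p^2-2(n+2)p+(n+1)}{(n-2\alpha_1+1)p^{\alpha_1+2}-(n-2\alpha_1-1)p^{\alpha_1+1}-(n+3)p+(n+1)}\right].$$
Here the numerator of the inner fraction does not depend on $\alpha_1$, so the whole $\alpha_1$-dependence is concentrated in the denominator $D(\alpha_1):=(n-2\alpha_1+1)p^{\alpha_1+2}-(n-2\alpha_1-1)p^{\alpha_1+1}-(n+3)p+(n+1)$. Since the fixed numerator is positive for $p\ge 2$ and $n\ge 1$ (it factors as roughly $(n+1)(p-1)^2-2p$, which is easily bounded below), the sign of the derivative of $\cdeg$ in $\alpha_1$ is governed entirely by the sign of the forward difference $D(\alpha_1+1)-D(\alpha_1)$: if $D$ is strictly increasing then the bracketed expression strictly increases, so $\cdeg$ strictly \emph{decreases}. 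Thus I would again compute $D(\alpha_1+1)-D(\alpha_1)$, which is a clean $p^{\alpha_1}$ times a low-degree polynomial in $p$ with coefficients linear in $n-2\alpha_1$, and verify it is positive throughout the range.

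The main obstacle, and the only place needing genuine care, is the boundary and sign bookkeeping rather than any deep idea. Two points deserve attention: first, the coefficient $n-2\alpha_1-1$ changes sign near the symmetric point, so the forward-difference estimates must be examined at $\alpha_1=[n/2]-1,[n/2]$ both for even and odd $n$ to confirm strictness right up to the extremal argument and to confirm that $\alpha_1$ and $\alpha_2=n-\alpha_1$ genuinely ``tend to be equal'' at the optimum; second, one must confirm the fixed numerator in part (b) is strictly positive for every prime $p$ and every $n\ge 2$, since a sign flip there would reverse the monotonicity conclusion. Once those two positivity facts are nailed down, the monotonicity is immediate and the identification of the extremal groups $\mathbb{Z}_p\times\mathbb{Z}_{p^{n-1}}$ and $\mathbb{Z}_{p^{[n/2]}}\times\mathbb{Z}_{p^{n-[n/2]}}$ follows directly from the endpoints $\alpha_1=1$ and $\alpha_1=[n/2]$, completing both parts.
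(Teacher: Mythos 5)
Your strategy is the same as the paper's: the paper likewise fixes $\alpha_1+\alpha_2=n$, asserts that $|C(\mathbb{Z}_{p^{\alpha_1}}\times\mathbb{Z}_{p^{\alpha_2}})|=2+2p+\cdots+2p^{\alpha_1-1}+(n-2\alpha_1+1)p^{\alpha_1}$ is strictly increasing in $\alpha_1$, and that in the rewritten expression for $\cdeg$ the whole $\alpha_1$-dependence sits in the denominator, making $\cdeg$ strictly decreasing; the paper gives no more detail than that, so your forward differences are a legitimate fleshing-out, and they are computed correctly: $N(\alpha_1+1)-N(\alpha_1)=(n-2\alpha_1-1)(p-1)p^{\alpha_1}$ and $D(\alpha_1+1)-D(\alpha_1)=(n-2\alpha_1-1)(p-1)^2p^{\alpha_1+1}$. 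Note that both are strictly positive on the entire admissible range of increments $1\le\alpha_1\le[n/2]-1$, since there $n-2\alpha_1-1\ge 1$ (equal to $1$ at the top for $n$ even, at least $2$ for $n$ odd); your flagged boundary case $n-2\alpha_1-1=0$ occurs only at $\alpha_1=(n-1)/2$ with $n$ odd, whose increment leaves the admissible range $\alpha_1\le[n/2]$, so no separate check is needed. You should also record that $D(\alpha_1)=(p-1)^2\,|L(\mathbb{Z}_{p^{\alpha_1}}\times\mathbb{Z}_{p^{\alpha_2}})|>0$, so increasing $D$ really does decrease the inner fraction.

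The one genuine flaw is your second ``positivity fact'': the fixed numerator $(n+1)p^2-2(n+2)p+(n+1)=(n+1)(p-1)^2-2p$ is \emph{not} strictly positive for every prime $p$ and every $n\ge 2$. At $p=2$ it equals $n-3$, which is negative for $n=2$ and zero for $n=3$, so the verification you defer would fail as stated, and with it the claimed direction of monotonicity. The repair is immediate: for $n\le 3$ one has $[n/2]=1$, so the class of rank-$2$ abelian $p$-groups of order $p^n$ consists of the single group $\mathbb{Z}_p\times\mathbb{Z}_{p^{n-1}}=\mathbb{Z}_{p^{[n/2]}}\times\mathbb{Z}_{p^{n-[n/2]}}$ and the theorem is vacuously true; for $n\ge 4$ the numerator is strictly positive, since at $p=2$ it equals $n-3\ge 1$ and for $p\ge 3$ it is at least $2(p-1)^2-2p=2(p^2-3p+1)>0$. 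With that patch your argument goes through, and strictness of the monotonicity yields the uniqueness required by both ``if and only if'' claims.
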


\bk\n{\bf Remark.} The above expression also shows that the
cyclicity degree of an abelian $p$-groups of rank {\rm 2} and
fixed order depends only on its number of subgroups.
\bigskip

\section{Further research} \label{concl}

Several questions and conjectures on cyclicity degrees of finite groups can be formulated. As an example we give the following

\bk\n{\bf Problem.} Is it true the following density result on
cyclicity degrees: for every $a \in [0,1]$ there exists a sequence
$(G_n)_{n \in \mathbb{N}}$ of finite groups such that
$\dd\lim_{n\to\infty} \cdeg(G_n)=a?$ Also, is it true that for every
$a \in (0,1] \cap \mathbb{Q}$ there exists a finite group $G$ such
that $\cdeg(G)=a$?


\vspace*{5ex}\small

\hfill
\begin{minipage}[t]{6cm}
Marius T\u arn\u auceanu \\
Faculty of  Mathematics \\
``Al.I. Cuza'' University \\
Ia\c si, Romania \\
e-mail: {\tt tarnauc@uaic.ro}
\end{minipage}

\vspace*{5ex}\small

\hfill
\begin{minipage}[t]{6cm}
L\'aszl\'o T\'oth \\
Department of Mathematics\\
University of P\'{e}cs\\
P\'ecs, Hungary \\
e-mail: {\tt ltoth@gamma.ttk.pte.hu}
\end{minipage}


\begin{thebibliography}{10}


\bibitem{Cal1987} Calhoun, W.C., {\it Counting subgroups of some finite groups},
Amer. Math. Monthly, {\bf 94} (1987), 54--59.

\bibitem{Gra1978} Gr\"atzer, G. {\it General lattice theory}, Academic Press (New York, 1978).

\bibitem{Gus1973} Gustafson, W.~H., {\it What is the probability that two group elements commute?},
Amer. Math. Monthly, {\bf 80} (1973), 1031--1034.

\bibitem{HHTW2012} Hampejs, M., Holighaus, N., T\'oth, L., and Wiesmeyr, C., {\it Representing and counting the subgroups
of the group $\Z_m \times \Z_n$}, Journal of Numbers, vol. 2014, Article ID 491428, arXiv:1211.1797 [math.GR].

\bibitem{HamTot2013} Hampejs, M. and T\'oth, L., {\it On the subgroups of finite Abelian groups of rank three},
Annales Univ. Sci. Budapest., Sect Comp., {\bf 39} (2013), 111--124.

\bibitem{Hup1967} Huppert, B., {\it Endliche Gruppen}, I, II, Springer Verlag (Berlin, 1967, 1968).

\bibitem{Isa2008} Isaacs, I.M., {\it Finite group theory}, Amer. Math. Soc. (Providence, RI, 2008).

\bibitem{Les1987} Lescot, P., {\it Sur certains groupes finis}, Rev. Math. Sp\'eciales, {\bf 8} (1987), 276--277.

\bibitem{Les1988} Lescot, P., {\it Degr\'e de commutativit\'e et structure d'un groupe fini $(1)$},
Rev. Math. Sp\'eciales, {\bf 8} (1988), 276--279.

\bibitem{Les1989} Lescot, P., {\it Degr\'e de commutativit\'e et structure d'un groupe fini $(2)$},
Rev. Math. Sp\'eciales, {\bf 4} (1989), 200--202.

\bibitem{Les1995} Lescot, P., {\it Isoclinism classes and commutativity degrees of finite groups}, J. Algebra,
{\bf 177} (1995), 847--869.

\bibitem{Les2001} Lescot, P., {\it Central extensions and commutativity degree}, Comm. Algebra, {\bf 29} (2001),
4451--4460.

\bibitem{Pos1988} Postnikov, A.~G., {\it Introduction to Analytic Number Theory}, Translations of Mathematical Monographs, 68,
Amer. Math. Society (Providence, RI, 1988).

\bibitem{Rus1979} Rusin, D.~J., {\it What is the probability that two elements of a finite group commute?},
Pacific J. Math., {\bf 82} (1979), 237--247.

\bibitem{Sch1994} Schmidt, R., {\it Subgroup lattices of groups}, de Gruyter Expositions in Mathe\-ma\-tics 14, de Gruyter (Berlin, 1994).

\bibitem{She1975} Sherman, G., {\it What is the probability an automorphism fixes a group element?}, Amer. Math. Monthly, {\bf 82} (1975), 261--264.

\bibitem{Suz1982} Suzuki, M., {\it Group theory}, I, II, Springer Verlag (Berlin, 1982, 1986).

\bibitem{Tar2003} T\u arn\u auceanu, M., {\it Actions of finite groups on lattices}, Seminar Series in Mathematics, Algebra 4, Univ. "Ovidius",
Constan\c ta, 2003.

\bibitem{Tar2006} T\u arn\u auceanu, M.,  {\it Groups determined by posets of subgroups}, Ed. Matrix Rom (Bucure\c sti, 2006).

\bibitem{Tar2007} T\u arn\u auceanu, M., {\it A new method of proving some classical theorems of abelian groups}, Southeast Asian Bull. Math.,
{\bf 31} (2007), 1191--1203.

\bibitem{Tar2009} T\u arn\u auceanu, M., {\it Subgroup commutativity degrees of finite groups}, J. Algebra,
{\bf 321} (2009), 2508--2520, doi: 10.1016/j.jalgebra.2009.02.010.

\bibitem{Tar2010} T\u arn\u auceanu, M., {\it An arithmetic method of counting the subgroups of a finite abelian group},
Bull. Math. Soc. Sci. Math. Roumanie (N.S.), {\bf 53{\rm/}101} (2010), 373-386.

\bibitem{Tar2011} T\u arn\u auceanu, M., {\it Addendum to "Subgroup commutativity degrees of finite groups"}, J. Algebra, {\bf 337}
(2011), 363--368, doi: 10.1016/j.jalgebra.2011.05.001.

\bibitem{Tar2013} T\u arn\u auceanu, M., {\it Some combinatorial aspects of finite Hamiltonian groups},
Bull. Iranian Math. Soc., {\bf 39} (2013), 841--854.

\bibitem{Tar} T\u arn\u auceanu, M., {\it Normality degrees of finite groups}, submitted.

\bibitem{Tot2011} T\'{o}th, L., {\it Menon's identity and arithmetical sums representing functions of several variables},
Rend. Sem. Mat. Univ. Politec. Torino, {\bf 69} (2011), 97--110.

\bibitem{Tot2012} T\'oth, L., {\it On the number of cyclic subgroups of a finite Abelian
group}, Bull. Math. Soc. Sci. Math. Roumanie (N.S.), {\bf 55(103)} (2012), 423--428.
\end{thebibliography}
\end{document}